\newlist{prooflist}{description}{1}
\setlist[prooflist]{font=\normalfont \itshape, labelindent = \parindent, leftmargin = 0pt}
\renewcommand{\lg}{\mathfrak{g}}
\newcommand{\lh}{\mathfrak{h}}
\newcommand{\nulleins}[1]{{#1}^{0,1}}
\newcommand{\einsnull}[1]{{#1}^{1,0}}
\newcommand{\liesl}{\mathfrak{sl}}
\DeclareMathOperator{\ad}{\mathsf{ad}}
\title{Almost abelian complex nilmanifolds}
\author[A. Andrada]{Adrián Andrada}
\author[R. M. Arroyo]{Romina M. Arroyo}
\author[M. L. Barberis]{María L. Barberis }
\address{Adrián Andrada, Romina M. Arroyo and María L. Barberis  \newline
 FAMAF, Universidad Nacional de C\'ordoba and CIEM, CONICET
 \newline Av. Medina Allende s/n, Ciudad Universitaria \\ 
 X5000HUA C\'ordoba\\ Argentina}
\email{adrian.andrada@unc.edu.ar}
\email{romina.arroyo@unc.edu.ar}
\email{mlbarberis@unc.edu.ar}
\author[S. Rollenske]{S\"onke Rollenske}
\author[K. Wehler]{Konstantin Wehler}
\address{S\"onke Rollenske, Konstantin Wehler
\newline
FB 12/Mathematik und Informatik\\
Philipps-Universit\"at Marburg\newline
Hans-Meerwein-Str. 6\\
35032 Marburg\\Germany}
\email{rollenske@mathematik.uni-marburg.de}
\email{konstantin.wehler@uni-marburg.de}
\begin{document}
\begin{abstract}
We show that a complex structure on a nilpotent almost abelian real Lie algebra is unique if it exists. As a consequence, we get full control over the cohomology and deformations of almost abelian complex nilmanifolds.
\end{abstract}
\subjclass[2010]{32Q57; 22E25, 17B30, 32G05}
\keywords{almost abelian Lie algebra, complex nilmanifold}

\maketitle
\setcounter{tocdepth}{1}
\tableofcontents

\section{Introduction}
Nilmanifolds have been established as one of the most reliable classes of manifolds if one is looking for an accessible example of an exotic geometric structure. But since nilpotent Lie algebras are classified only up to dimension $7$, and even the theory of $2$-step nilpotent Lie algebras is very rich,  it is much rarer to have general results.

In this work, we are interested in what is usually called a complex nilmanifold, that is, a compact quotient of a real nilpotent Lie group $G$ by a lattice $\Gamma$,  equipped with a complex structure induced by a  left-invariant complex structure on $G$. The latter is uniquely determined by an almost complex structure $J\colon \lg \to \lg$ on the Lie algebra $\lg$ of $G$ satisfying an integrability condition. 
For such a manifold one hopes to prove that, as much as possible, its geometry and its topological and holomorphic invariants are determined by $(\lg, J)$ and are independent of the lattice; compare \cite{rollenske11}.
This has been successful for particular kinds of complex structures \cite{Sakane, CFP06, rtx20} and in some cases with very small commutator \cite{rollenske09}. This includes all cases in  dimension six, where a rough classification of possible $(\lg, J)$ was provided by Salamon \cite{salamon01}, which was later refined in \cite{ceballos2016invariant}. Another important feature of complex nilmanifolds, which is independent of the lattice, is that they have trivial canonical bundle \cite{BDV}. 

The aim of this article is to provide a nearly complete understanding of complex nilmanifolds whose underlying Lie algebra is almost abelian, that is, it admits an abelian ideal of codimension one. We will call such manifolds almost abelian complex nilmanifolds. Note that neither the nilpotency index nor the dimension of the commutator is bounded in this class. More precisely, if the nilpotent almost abelian Lie algebra has dimension $2n$ then the step of nilpotency is at most $n$ and the dimension of the commutator ideal is at most $2n-2$.

Building on the work of \cite{ABDGH24}, we show that if a complex structure exists on a nilpotent almost abelian Lie algebra it is unique up to Lie algebra isomorphism (Proposition \ref{prop: classification}). Analysing this particular complex structure and using the standard tools of the trade, we take full geometric and cohomological control:
\begin{thm}\label{thm: stbs}
  Let $X = (\Gamma\backslash G, J)$ be an almost abelian complex nilmanifold with associated Lie algebra $\lg$. Then the following hold:
  \begin{enumerate}
      \item The Lie algebra $\lg$ admits a stable torus bundle series and thus $X$ has the structure of an iterated principal holomorphic torus bundle as in \eqref{eq: tower nilmanifold} below.
   \item The Dolbeault cohomology of $X$ is computed by left-invariant forms, that is, the natural inclusion 
   \[ H^{p,q}(\lg, J) \into H^{p,q}_{\delbar}(X)\]
   is an isomorphism.
   \item The Frölicher spectral sequence of $X$ degenerates at the first page.
  \end{enumerate}
\end{thm}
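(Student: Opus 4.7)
The plan is to establish (1) directly and then deduce (2) and (3) by invoking the general machinery for complex nilmanifolds admitting a stable torus bundle series, as developed in \cite{rollenske11}. By Proposition \ref{prop: classification} we may work with a specific normal form of $(\lg,J)$. Writing $\lg = \mathbb{R} X \ltimes \lu$ with $\lu$ the abelian ideal of codimension one, nilpotency forces $A := \ad_X|_\lu$ to be a nilpotent endomorphism of $\lu$. I would then consider the associated kernel flag
\[
0 = \lu_0 \subset \lu_1 \subset \lu_2 \subset \dots \subset \lu_k = \lu, \qquad \lu_i := \ker A^i,
\]
and extend it to a flag on $\lg$ by adjoining a $J$-stable complement of $\lu_k$ at the top. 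Three properties then need to be checked: (a) each $\lu_i$ is $J$-invariant, which is where the explicit shape of $J$ from Proposition \ref{prop: classification} enters crucially; (b) each $\lu_i$ is rational with respect to the $\mathbb{Q}$-structure induced by $\Gamma$, since $A$ can be chosen rational so that its iterated kernels are rational subspaces; (c) $\lu_{i+1}/\lu_i$ is central and abelian in $\lg/\lu_i$, yielding on the group level a principal holomorphic torus bundle structure at each stage. Taken together, these properties produce the tower \eqref{eq: tower nilmanifold} and certify that the flag is a stable torus bundle series in the sense of \cite{rollenske11}.

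With such a series in place, parts (2) and (3) follow from the general theory: whenever a complex nilmanifold admits a stable torus bundle series, the inclusion of left-invariant forms induces an isomorphism on Dolbeault cohomology and the Frölicher spectral sequence degenerates at $E_1$. Both statements are obtained by induction on the length of the tower, using at each step the Borel spectral sequence of a principal holomorphic torus bundle, which degenerates early and whose relevant pages are computable from invariant forms on the fibre and the inductively known cohomology of the base.

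The principal obstacle lies in (1), specifically in verifying that the kernel flag of $A$ is automatically $J$-invariant and that the successive quotients carry compatible complex structures. For a generic nilpotent endomorphism of an even-dimensional real vector space this would fail, but the rigidity provided by Proposition \ref{prop: classification} pins down $J$ so that it is compatible with $A$ in exactly the way required. Carrying this compatibility through while simultaneously tracking the rational structure coming from the lattice is the crux; once it is done, the deduction of (2) and (3) is mechanical.
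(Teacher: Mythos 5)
There are two genuine gaps.

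First, your claim (a) is false: the kernel flag is \emph{not} $J$-invariant, and no choice of complement at the top can repair this because intermediate terms of the flag already fail. In the normal form of Proposition \ref{prop: classification} one has $\ker \phi^i = \kz^i\lg$ for $i$ below the nilpotency index, and the vector $e_1$ satisfies $Je_1 = -e_0 \notin \gotha$. When $\epsilon = 0$ we have $\phi(e_1)=0$, so $e_1 \in \ker\phi = \kz\lg$ and already the first term of your flag is not $J$-stable; when $\epsilon = 1$ the vector $e_1$ generates a Jordan block of size $j$, so $e_1 \in \ker\phi^{j}$ and every term from the $j$-th on fails to be $J$-stable (the top term $\lu_k = \gotha$ never is, since $\gotha \cap J\gotha = \gothb$ is a proper subspace of $\gotha$). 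This is exactly the phenomenon the paper has to work around: its stable series is the hybrid
\[ 0 \subset \kz^1\lg \subset \dots \subset \kz^{j-1}\lg \subset \kz^{j-1}\lg +\kc^{\nu-j}\lg \subset \dots \subset \kz^{j-1}\lg +\kc^1\lg \subset \lg, \]
which truncates the ascending central series precisely before $J$-invariance breaks down and then switches to (translates of) the descending central series; $J$-invariance of the second half is proved by passing to the quotients $\lg/\kz\lg$ and inducting. Your appeal to ``the rigidity provided by Proposition \ref{prop: classification}'' to make the kernel flag compatible with $J$ is wishful: the rigidity pins $J$ down, but into a shape that is \emph{incompatible} with the full kernel flag.

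Second, part (3) does not follow ``mechanically'' from the existence of a stable torus bundle series. The Iwasawa manifold carries such a series (the descending central series works for any complex parallelisable structure), yet its Fr\"olicher spectral sequence does not degenerate at $E_1$ ($b_1 = 4$ while $h^{1,0}+h^{0,1}=5$). The Borel/Leray-type inductive argument gives you computability of the pages, not degeneration. The paper instead proves (3) by a separate argument: it applies the Hochschild--Serre spectral sequence for the one-dimensional quotients $\lg/\gotha$ and $\lg/\gothb$ to express both the total de Rham and the total Dolbeault cohomology as cohomology of a one-dimensional complex Lie algebra with values in an exterior algebra, and then matches the two module structures (Lemma \ref{lem: module structures}) to get $\sum_k b_k = \sum_{p,q} h^{p,q}$, which forces $E_1$-degeneration. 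Part (2), by contrast, is correctly handled in your proposal: it does follow from the torus bundle structure via Console--Fino, or alternatively from nilpotency of the complex structure.
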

Although the proof of the last item is slightly more involved, as a byproduct, we obtain an explicit procedure to compute all Betti and Hodge numbers of such a manifold, as explained in Section \ref{sect: computation}. 

With the result above as a starting point, it is natural to look at families, respectively, the space of all (left-invariant) complex structures on the nilmanifold. Again, the class of almost abelian complex nilmanifolds appears to be made to fit the assumptions of the already existent results, giving us the strongest possible conclusion.
\begin{thm}\label{thm: deform}
Let $X$ be an almost abelian complex nilmanifold. Then the following hold:
\begin{enumerate}
     \item Every small deformation of $X$ is a complex nilmanifold and deformations of $X$ are unobstructed, that is, the Kuranishi space is smooth. Moreover, the Kuranishi family is universal.
   \item Every deformation in the large of $X$ is a complex nilmanifold, that is, if $\kx \to B$ is a smooth family over a connected base and one fibre is isomorphic to $X$, then every fibre is isomorphic to a complex nilmanifold (with the same underlying real nilmanifold).
\end{enumerate}
\end{thm}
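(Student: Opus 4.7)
The plan is to combine the structural results of Theorem~\ref{thm: stbs} --- particularly the stable torus bundle series and the invariance of Dolbeault cohomology --- with the existing deformation-theoretic machinery for complex nilmanifolds, and then to handle the unobstructedness claim by a direct computation using the normal form of $(\lg,J)$ provided by Proposition~\ref{prop: classification}.

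For part~(1), I would first observe that the invariance of Dolbeault cohomology from Theorem~\ref{thm: stbs}(2), combined with the fact that $T^{1,0}X$ is a left-invariant holomorphic vector bundle, extends to an isomorphism $H^{0,q}(X, T^{1,0}X) \cong H^{0,q}(\lg, T^{1,0}\lg)$ for every $q$ (either directly, since $T^{1,0}X$ is smoothly trivial via left-translation, or by an iterated Leray argument along the tower of principal holomorphic torus bundles). This cuts the infinite-dimensional Kuranishi problem down to a finite-dimensional algebraic one, governed by the differential graded Lie algebra of invariant $(0,*)$-forms with values in $T^{1,0}\lg$. That every small deformation is then itself left-invariant (hence a complex nilmanifold) is a formal consequence. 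For unobstructedness, I would expand an infinitesimal deformation $\mu$ in a basis adapted to the almost abelian splitting $\lg = \lh \oplus \mathbb{R}\cdot X_0$ coming from Proposition~\ref{prop: classification}, where $\lh$ is the codimension-one abelian ideal. All brackets on $\lg$ are governed by the single nilpotent operator $\ad(X_0)|_{\lh}$, so the quadratic term $[\mu,\mu]$ in the Maurer--Cartan equation $\bar\partial \mu + \tfrac12[\mu,\mu] = 0$ should be explicit enough to be solved to all orders by induction along the filtration levels of the stable torus bundle series. Universality of the Kuranishi family then follows from a standard semicontinuity criterion, because the almost abelian type of the fibre --- and hence $h^0(X, T^{1,0}X)$ --- is preserved in the family by the uniqueness of Proposition~\ref{prop: classification}.

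For part~(2), let $\mathcal{X} \to B$ be a smooth family as in the statement and $B_0 \subseteq B$ the locus of parameters whose fibre is a complex nilmanifold. Part~(1) already shows that $B_0$ is open. For closedness, I would exploit the stable torus bundle series together with the uniqueness statement: the underlying real torus bundle tower persists under $C^\infty$ deformation by Ehresmann's theorem, and the varying complex structure on each fibre must remain compatible with this tower in any limit. Proposition~\ref{prop: classification} then identifies the isomorphism type of the limiting complex structure as the unique almost abelian one, so the limiting fibre is again a complex nilmanifold. Connectedness of $B$ gives $B_0 = B$.

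The main obstacle I expect is the unobstructedness claim in~(1). The reduction of the Kuranishi space to its invariant finite-dimensional model is by now formal in the presence of the stable torus bundle series, while universality and part~(2) follow rather smoothly once this reduction is combined with Proposition~\ref{prop: classification}. By contrast, constructing a formal solution to the Maurer--Cartan equation order by order will genuinely use the explicit structure of almost abelian $(\lg, J)$: one has to carefully exploit that $\ad(X_0)$ is nilpotent and respects a $J$-compatible filtration in order to kill the obstructions arising at each step of the recursion.
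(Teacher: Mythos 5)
Your outline for the first half of part (1) and for universality is consistent with the paper: small deformations are again invariant complex structures by the general theory of left-invariant Kuranishi theory (the paper cites \cite[Thm.~2.6]{rollenske09b}), and universality is obtained from Wavrik's criterion by showing $\dim\Aut(X_t)=\dim H^{n-1,0}(\lg,J_t)$ is constant, which follows from the uniqueness of the complex structure in Proposition~\ref{prop: classification} (the paper passes through Serre duality and the triviality of the canonical bundle to make the semicontinuity argument precise). However, your treatment of unobstructedness is a genuine gap, not just a technical one left to the reader: you propose to solve the Maurer--Cartan equation order by order ``along the filtration levels,'' but you give no reason why the obstruction at each order should vanish, and nilpotency of $\ad(X_0)$ alone does not produce one. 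The paper avoids this computation entirely: since $X$ has trivial canonical bundle (a general fact for complex nilmanifolds) and its Fr\"olicher spectral sequence degenerates at $E_1$ (Theorem~\ref{thm: stbs}(3), i.e.\ Theorem~\ref{thm: Froelicher}, which is already proved at this point), unobstructedness is an instance of the Bogomolov--Tian--Todorov type theorem \cite[Theorem~3.3]{ACRT18}. You already have the $E_1$-degeneration in hand; you should use it here rather than attempting a direct recursion.

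For part (2), your open--closed argument identifies the right strategy but the crucial step is asserted, not proved. That ``the varying complex structure on each fibre must remain compatible with this tower in any limit'' is exactly the difficult point: Ehresmann's theorem preserves the underlying \emph{smooth} torus bundle tower, but says nothing about whether the limiting complex structure is left-invariant or respects the tower holomorphically. The inputs you are missing are (a) that complex tori form a \emph{good fibre class}, i.e.\ every deformation in the large of a complex torus is again a complex torus --- a nontrivial theorem of Catanese, not a formal consequence of anything in this paper --- and (b) the inductive machinery of \cite[Theorem~A]{rollenske09}, which requires in addition that the stable principal torus bundle series ends in an abelian ideal so that the bottom of the tower is itself a torus. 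The paper's proof of (2) consists precisely of verifying these hypotheses for the series constructed in Corollary~\ref{cor: stbs} (whose last subspace $\kz^{j-1}\lg+\kc^1\lg$ is an abelian ideal) and then invoking that theorem; without it, your closedness claim does not go through.
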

To our knowledge, this is the first class of complex nilmanifolds with arbitrarily large nilpotency index, where one has this kind of control over the cohomology and deformations. It would be interesting to investigate whether the strong complex geometric properties are reflected in the existence of special Hermitian metrics.

Some results on this topic are already available in the literature. For instance, it was shown in \cite{AL} that the only nilpotent almost abelian Lie algebras admitting an SKT metric are Lie algebras isomorphic to the direct sum of the three-dimensional Heisenberg algebra and an abelian Lie algebra. In fact, the existence of an SKT metric on a nilpotent Lie algebra implies that the Lie algebra has to be 2-step nilpotent (see \cite{AN}).

Other types of geometric structures have been considered on almost abelian nilmanifolds (and in the more general class of almost abelian solvmanifolds). There is a vast literature on this subject, see for instance \cite{AB1, AB2, AT, BFLT, BF, FP2, Fr, LW, LRV, Mo}.

\subsection*{Acknowledgements}
We are grateful to 	Pierre-Emmanuel Chaput for some help regarding Proposition \ref{prop: counting} and also to Leandro Cagliero and Jorge Lauret for useful comments. 
This work was partially supported by the DFG through grants RO 3734/4-1 and AG 92/5-1. AA, RMA and MLB have been partially supported by CONICET, FONCYT and SeCyT-UNC. RMA also would like to acknowledge support from the ICTP through the Associates Programme (2023-2028).

\section{Preliminaries and notation} 

In this section, we recall some basic notions and collect some results on complex nilmanifolds. In particular, we fix the notation used throughout the rest of the paper.

\subsection{Complex nilmanifolds}
Let $\lg$ be a real nilpotent Lie algebra and $G$ its associated simply connected nilpotent Lie group. By a theorem of Malcev \cite{malcev51} the Lie group $G$ admits a lattice $\Gamma\subset G$, that is, a discrete co-compact subgroup, if and only if there is a Lie algebra $\lg_\IQ$ defined over $\IQ$ such that $\lg_\IQ \otimes \IR = \lg$. In other words, this means that $\lg$ admits a basis with rational structure constants. In this case, the quotient $M= \Gamma \backslash G$ is a compact nilmanifold.

Since we will be interested in nilpotent almost abelian Lie algebras, it is worth noting that the existence of a codimension one abelian ideal in $\lg$ together with the theorem mentioned above guarantees the existence of a lattice in $G$: using the Jordan normal form we can find a basis of $\lg$ with structure constants equal to 0 or 1 (see Section \ref{subsect: almost abelian}).

Let $J$ be an almost complex structure on $\lg$, that is, an endomorphism $J\colon \lg \to \lg$ such that $J^2 = -\id_\lg$. We say that $J$ is integrable if its Nijenhuis tensor vanishes on $\lg$ and call the pair $(\lg, J)$ a Lie algebra with complex structure. 

By left-translations every complex structure $J$ on $\lg$ can be extended to a complex structure on the Lie group $G$. Since this complex structure is left-invariant it descends to the quotient $M$ and $X= (M,J)$ is a compact complex nilmanifold.

\subsection{Lie algebra Dolbeault cohomology}\label{sect: Lie algebra cohomology}
In the following we briefly set up the necessary cohomology theory for Lie algebras with complex structure. For details we refer to \cite{rollenske09b}.

Recall that the cohomology $H^\bullet(\lg, V)$ of a Lie algebra $\lg$ with values in a $\lg$-module $V$ can be computed with the so-called Chevalley-Eilenberg complex \cite[Section~7]{Weibel}. Suppose $\lg$ admits a complex structure $J$. Then the complexified Lie algebra $\lg_\IC= \lg^{1,0} \oplus \lg^{0,1}$ decomposes into the $\pm i$-eigenspaces of the $\IC$-linear extension of $J$. Therefore, the exterior algebra of the dual space $\lg_\IC^*$ admits a decomposition
\[
\Wedge^k \lg_\IC^* = \bigoplus_{p+q=k} \Wedge^{p,q}\lg^*,
\]
where $\Wedge^{p,q}\lg^* = \Wedge^p\lg^{*1,0} \otimes \Wedge^q \lg^{*0,1}$. Since $J$ is integrable the differential $d$ in the Chevalley-Eilenberg complex $(\Wedge^\bullet \lg_\IC^*,d)$ of the trivial $\lg_\IC$-module $\IC$ decomposes into $d = \del + \delbar$. The adjoint action of $\lg$ induces a $\lg^{0,1}$-module structure on $\Wedge^p\lg^{*1,0}$ and the Dolbeault cohomology groups of $(\lg,J)$ can be defined as
\[
H^{p,q}(\lg,J) = H^q(\lg^{0,1}, \Wedge^p\lg^{*1,0}),
\]
where the Chevalley-Eilenberg complex computing $H^q(\lg^{0,1}, \Wedge^p\lg^{*1,0})$ is the complex $(\Wedge^{p,\bullet}\lg^*, \delbar)$.

Now consider  the geometric setting: if $X = (M,J)$ is a complex nilmanifold with Lie algebra $\lg$, then elements in $\Wedge^{p,q}\lg^*$ can be considered as left-invariant differential forms of type $(p,q)$ on $X$ and the differential $d= \del +\delbar$ on $\Wedge^{\bullet,\bullet}\lg^*$ coincides with the restriction of the usual differential on $X$ to left-invariant forms. 

It has been shown that the induced map
\[
H^{p,q}(\lg, J) \to H_{\delbar}^{p,q}(X)
\]
is always an inclusion \cite{con-fin01}. In many cases it is known to be an isomorphism; we say that Dolbeault cohomology can be computed by left-invariant forms. It is conjectured to always be an isomorphism (compare \cite{rollenske11} for an overview and \cite{frr19,rtx20} for the most recent progress).

\begin{rem}\label{rem: nomizu}
    Similarly, we have an inclusion $H^k(\lg,\IR)\hookrightarrow H_{dR}^k(M,\IR)$ of the Lie algebra cohomology of the trivial $\lg$-module $\IR$ into the de Rham cohomology of $M$. By a theorem of Nomizu \cite{nomizu54} this inclusion is always an isomorphism.
\end{rem}

\subsection{Fibrations and torus bundle series}
We recall in this section some observations about the geometry of complex nilmanifolds. 

Let $\lg$ be a real nilpotent Lie algebra. A rational structure for $\lg$ is a subalgebra $\lg_\IQ$ defined over $\IQ$ such that $\lg_\IQ \otimes \IR = \lg$. For instance, if $\Gamma \subset G$ is a lattice in the associated simply connected Lie group $G$, then the rational span $\langle \log \Gamma \rangle_\IQ \subset \lg$ is a rational structure for $\lg$ which determines the commensurability class of the lattice \cite[p.~204--205]{Cor-Green}. We say that a subalgebra $\lh\subset \lg$ is rational with respect to $\lg_\IQ$ if $\lh \cap \lg_\IQ$ is a rational structure for $\lh$.

There are two important filtrations of the Lie algebra $\lg$: the descending central series is defined by $\kc^{k+1}\lg = [\kc^k\lg, \lg]$ with $\kc^0\lg =\lg$ and the ascending central series is defined by $\kz^{k+1}\lg = \{ x \in \lg \,|\, [\lg,x] \subset \kz^k\lg\}$, where $\kz^0\lg =0$. In particular, the ideal $\kz^1\lg = \kz \lg$ is the centre of $\lg$. All of these subalgebras are rational with respect to every rational structure on $\lg$ \cite[p.~208]{Cor-Green}.

The following structure was first named in \cite{rollenske09}.
\begin{defin}\label{def: stbs} 
 Let $\lg$ be a real nilpotent Lie algebra with a fixed rational structure $\lg_\IQ$ and an integrable complex structure $J$. We call a filtration 
 \[0=\gothf^0 \subset \gothf^1 \subset \dots \subset \gothf^t=\lg\]
a  principal torus bundle series if for all $i=1, \dots, t$ the following conditions are satisfied:
\begin{enumerate}
 \item The subspaces $\gothf^i$ are rational.
 \item The subspaces $\gothf^i$ are $J$-invariant.
 \item We have $\gothf^{i+1}/\gothf^{i} \subset \kz \left(\lg/\gothf^i\right)$.
\end{enumerate}
The principal torus bundle series is called stable if the conditions are satisfied for every integrable complex structure $J$ and for every choice of rational structure on $\lg$. 
\end{defin}

The geometric significance of this notion is the following, extracted as a special case from \cite{rollenske09, rollenske11}.
\begin{prop}\label{prop: stbs}
Let $\lg$ be a real nilpotent Lie algebra admitting rational structures. Then $\lg$ admits a stable principal torus bundle series if and only if on the corresponding simply connected Lie group $G$ there is a tower of central extensions
\begin{equation}\label{eq: tower groups}
 \begin{tikzcd}
    G=G_0 \rar{\pi_0} & G_1 \rar{\pi _1} & {\cdots }\rar{\pi_{t-2}} & G_{t-1}
   \end{tikzcd} 
\end{equation}
such that for every choice of lattice $\Gamma\subset G$ and complex structure $J$ on $\lg$ the tower \eqref{eq: tower groups} induces a tower of holomorphic fibrations on $X = (\Gamma\backslash G, J)$, 
\begin{equation}\label{eq: tower nilmanifold}
 \begin{tikzcd}
    X= X_0 \rar{\pi_0} & X_1 \rar{\pi _1} & {\cdots} \rar{\pi_{t-2}} & X_{t-1}
   \end{tikzcd} 
\end{equation}
where each $\pi_i \colon X_i \to X_{i+1}$ is a principal holomorphic torus bundle over a complex nilmanifold of lower dimension. 
\end{prop}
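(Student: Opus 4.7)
The plan is to translate each axiom of a stable torus bundle series into a geometric property at the group, respectively manifold, level, using Malcev's theorem as the dictionary between rational subalgebras and closed subgroups containing lattices.

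For the forward direction, I would start by checking inductively that each $\gothf^i$ is an ideal: granted that $\gothf^i$ is an ideal, the centrality condition $\gothf^{i+1}/\gothf^i \subset \kz(\lg/\gothf^i)$ forces $[\gothf^{i+1},\lg]\subset \gothf^i$. Let $F^i \subset G$ be the corresponding closed connected normal Lie subgroup (simply connected by nilpotency) and set $G_i := G/F^i$. The same centrality condition translates via the exponential map to the statement that $F^{i+1}/F^i$ is central in $G_i$, so each $G_i \to G_{i+1}$ is a central extension by the simply connected abelian group $F^{i+1}/F^i$, producing the tower \eqref{eq: tower groups}.

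Next, for any lattice $\Gamma$ and integrable $J$, stability of the filtration provides rationality of each $\gothf^i$ with respect to $\langle\log\Gamma\rangle_\IQ$, so by Malcev's theorem $\Gamma \cap F^i$ is a lattice in $F^i$; consequently $\Gamma$ projects to a lattice $\Gamma_i\subset G_i$ and $(\Gamma\cap F^{i+1})/(\Gamma\cap F^i)$ is a lattice in $F^{i+1}/F^i$. $J$-invariance of each $\gothf^i$ lets $J$ descend to $\lg/\gothf^i$ and hence to a left-invariant complex structure on $G_i$ making every projection holomorphic. Taking compact quotients yields the holomorphic tower \eqref{eq: tower nilmanifold} with fibres the complex tori $(F^{i+1}/F^i)/((\Gamma\cap F^{i+1})/(\Gamma\cap F^i))$. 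The fibration $\pi_i$ is principal because $F^{i+1}/F^i$ is central in $G_i$: left translation by this subgroup commutes with the right $\Gamma_i$-action and therefore descends to a free holomorphic action of the fibre torus on $X_i$ that is transitive on fibres.

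For the converse, given the tower of central extensions and the induced holomorphic towers for every $\Gamma$ and $J$, I would define $\gothf^i$ as the Lie algebra of the kernel of $G \to G_i$. Centrality immediately gives $\gothf^{i+1}/\gothf^i \subset \kz(\lg/\gothf^i)$; holomorphicity of $\pi_i$ for every $J$ forces $J$-invariance of $\gothf^i$; existence of lattices in each $F^i$ for every $\Gamma$ forces rationality of $\gothf^i$ with respect to every rational structure. The main obstacle lies in the forward direction, namely the clean identification of $\pi_i$ as a \emph{principal} holomorphic torus bundle: one must check that the free torus action coming from central left translation is compatible with the complex structure and provides holomorphic local trivialisations. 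Once this geometric identification is set up, the remainder is a routine translation via Malcev's theorem and the descent of left-invariant tensors.
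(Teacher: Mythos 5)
Your argument is correct and is essentially the standard Malcev-theoretic proof: the paper itself gives no proof of Proposition \ref{prop: stbs}, citing \cite{rollenske09, rollenske11}, and those references establish the equivalence by exactly this dictionary (rational $J$-invariant central steps $\leftrightarrow$ closed normal subgroups meeting every lattice in a lattice, with central right translation giving the principal torus action). The one step you flag as the ``main obstacle'' --- holomorphic principality --- is in fact routine, since translation by the central subgroup is holomorphic for any left-invariant complex structure and local holomorphic sections of the submersion $\pi_i$ furnish the trivialisations.
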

This geometric description opens the door to all kinds of inductive arguments.

\begin{rem}
    Let $(\lg,J)$ be a nilpotent Lie algebra with complex structure. If $X=(M,J)$ is complex parallelisable, that is, $(\lg,J)$ is a complex Lie algebra, then the ascending and descending central series both provide a principal torus bundle series for $(\lg,J)$. If $J$ is abelian, that is, the subspace $\lg^{1,0}\subset \lg_\IC$ is abelian, then the ascending central series defines a principal torus bundle series for $(\lg,J)$.
    
    We will see in the next section that a (non-abelian) nilpotent almost abelian Lie algebra never admits a complex parallelisable structure and it admits an abelian complex structure only if it is isomorphic to the direct sum of the three-dimensional Heisenberg algebra $\lh_3$ and an abelian Lie algebra.
\end{rem}

\subsection{Remarks on the Jordan normal form}\label{sect: Jordan}
Let $V$ be a finite dimensional complex vector space of dimension $n$ and let $N$ be a nilpotent endomorphism of $V$.
 
 It is well known that there exists a unique partition  $n = \sum_i q_i \cdot i$ with $q_i \in \IN_0$  such that $N$ can be represented by a block diagonal, lower triangular matrix, where each block is a Jordan block of the form
  \begin{equation}\label{eq: J_k} \kj_k = \mat{  0 \\ 1 & \ddots&  \\  & \ddots & \ddots  \\0 &&1& 0 }\in \mathsf{M}_{k}(\IZ)
\end{equation}
 and $q_k$ gives the number of blocks of size $k$. 
 
Let us consider a slightly different perspective on this: by the Jacobson-Morozov theorem \cite[Theorem 10.3]{Knapp} we can extend $N$ to an $\liesl_2$-triple $H, N, M\in \End(V)$, making $V$ into an $\liesl_2(\IC)$-representation (see \cite[I.9]{Knapp} or \cite[Lecture 7]{FH}). Note that the choice of $M$ is not unique and everything that follows depends on this choice. 

 Now, a Jordan basis for $N$ can be found in the following way: decompose $V = \bigoplus_i V_i$ into irreducible $\liesl_2(\IC)$-representations. Then in $V_i$ choose a highest weight vector $v_i$, that is, a vector with $Mv_i =0$, and use  \[v_i,\; Nv_i, \dots,\; N^{\dim V_i -1} v_i\] 
 as part of the Jordan basis, giving the part of a basis corresponding to a Jordan block of size $\dim  V_i$.

 Having chosen the highest weight vectors $v_i$, we can write any vector $v\in V$ as
\[ v = \left(\sum_i \lambda_i v_i\right) + N w' = w + Nw'\]
with $Mw = 0 $.
If $w\neq0$, then  $w$ is the highest weight vector for an irreducible subrepresentation, that is, $w$ can be chosen as the starting vector for one Jordan block in a Jordan basis for $N$.

\section{Almost abelian complex nilmanifolds}\label{sect: almost abelian}
We now specialise to the class of almost abelian complex nilmanifolds.
\subsection{Almost abelian Lie algebras}\label{subsect: almost abelian}
A non-abelian Lie algebra $\lg$ is called \emph{almost abelian} if it has a codimension one abelian ideal $\gotha$. Similarly, we call a nilmanifold whose associated Lie algebra is almost abelian an \emph{almost abelian nilmanifold}. Suppose $\lg$ has dimension $2n+2$ and let $e_0\not\in \gotha$. Then the Lie algebra $\lg$ is isomorphic to the semi-direct product $\lg_A = \IR e_0 \ltimes_A \IR^{2n+1}$,
where $A$ is the matrix representing $\ad_{e_0}\colon \gotha \to \gotha$ with respect to some basis of $\gotha$. The Lie algebra $\lg_A$ is nilpotent if and only if the matrix $A$ is nilpotent and  two  almost abelian Lie algebras $\lg_A$ and $\lg_{A'}$ are isomorphic if and only if $A$ and $A'$ are conjugate, as explained in \cite{Fr}. Hence, by the existence of the Jordan normal form the isomorphism classes of nilpotent almost abelian Lie algebras of dimension $2n+2$ are in one-to-one correspondence with the partitions of $2n+1$. We call the Jordan partition of $A$ the Jordan partition of the almost abelian Lie algebra $\lg_A$. 

\begin{rem}\label{rem: fibre bundle}
    Let $\lg$ be a nilpotent almost abelian Lie algebra of dimension $2n+2$ which is not isomorphic to $\lh_3\oplus \IR^{2n-1}$. Then the centre $\kz\lg$ of $\lg$ has dimension at most $2n-1$ and it follows from the proof of \cite[Proposition~3.1]{hamrouni2008discrete} that the ideal $\gotha\subset \lg$ of codimension one is unique. Moreover, similar to \cite[Lemma~3.2]{hamrouni2008discrete} one verifies that $\gotha$ is also rational with respect to every rational structure on $\lg$. Therefore, every real nilmanifold with associated Lie algebra $\lg$ has the structure of a torus fibre bundle over the circle $S^1$. 
\end{rem}

\subsection{Nilpotent almost abelian Lie algebras with complex structure}
The question of existence of complex structures on nilpotent almost abelian Lie algebras was completely solved in \cite[Theorem~3.10]{ABDGH24}, which we rephrase slightly.
\begin{thm}[\cite{ABDGH24}]\label{thm: ABDGH}
 Let $\lg = \lg_A$ be a nilpotent almost abelian Lie algebra of dimension $2n+2$. Then $\lg$ admits a complex structure if and only if there exists  a partition
  \begin{equation}\label{eq: complex jordan B} n = \sum_{i>0} q_i \cdot i , \qquad q_i\geq 0,\end{equation}
and an index $j>0$ such that the Jordan partition of $A$ is
 \begin{align}\label{eq: jordan A}
 \begin{split}
     2n+1  &= {\sum_{i>0}} m_i \cdot i \\
     &=
     \begin{cases}     
    \displaystyle{\sum_{\stackrel{i>0}{  i\neq j,\, j-1}}} 2q_i \cdot i + (2 q_j +1)\cdot j + (2 q_{j-1}-1)\cdot (j-1), \; &\text{if } j\in I,\\ 
   &  \\
   \hspace*{.25cm} \displaystyle{ \sum_{i>1}} \; 2q_i \cdot i + (2 q_1 +1)\cdot 1 , \; &\text{if } j=1,
 \end{cases}
 \end{split}
 \end{align}
 where $I= \{i>1 \,|\, q_{i-1}>0\}$. 
 
 In particular, each partition of $n$ as in \eqref{eq: complex jordan B} gives rise to $|I|+1$ isomorphism classes of $(2n+2)$-dimensional almost abelian Lie algebras admitting complex structures. 
\end{thm}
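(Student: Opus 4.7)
The plan is to translate the existence of a complex structure on an almost abelian nilpotent Lie algebra $\lg = \lg_A$ into Jordan data for $A$ by studying how $J$ interacts with the codimension one abelian ideal $\gotha$. Since $\dim \gotha = 2n+1$ is odd, $\gotha$ cannot be $J$-invariant, so its maximal $J$-invariant subspace $V := \gotha \cap J\gotha$ has real dimension $2n$. Picking $f_0 \in \gotha \setminus V$ and setting $e_0 := Jf_0 \notin \gotha$ produces the decomposition $\lg = V \oplus \IR f_0 \oplus \IR e_0$ with $Jf_0 = e_0$, $Je_0 = -f_0$, and the entire bracket encoded by $A = \ad_{e_0}|_\gotha$.

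I would next extract the integrability constraints. Since $\gotha$ is abelian, the Nijenhuis tensor vanishes automatically on pairs drawn from $V$, on $(e_0, f_0)$, and on $(X, f_0)$; the only non-trivial identity $N_J(e_0, X) = 0$ for $X \in V$ reduces to $JAJX + AX = 0$. Projecting this onto the $\IR f_0$-summand of $\gotha$ forces $A(V) \subset V$, and the restriction to $V$ then reads $[A|_V, J] = 0$. Finally, nilpotency of $A$ rules out a non-zero $\IR f_0$-component of $Af_0$, yielding $v := Af_0 \in V$. In short, $\lg$ admits a complex structure if and only if $\gotha$ has a codimension one subspace $V$ on which $J$ acts as a complex structure commuting with $A|_V$, and $Af_0 \in V$ for some (equivalently, any) complementary vector $f_0$.

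With this structural picture the Jordan analysis becomes tractable. Because $A|_V$ is $\IC$-linear on the complex $n$-dimensional space $(V, J)$, its real Jordan partition is obtained by doubling the complex one: if $A|_V$ has complex Jordan partition $n = \sum_i q_i \cdot i$, then its real Jordan partition is $2n = \sum_i 2 q_i \cdot i$. On $\gotha$ the matrix of $A$ takes the block form $\begin{pmatrix} A|_V & v \\ 0 & 0 \end{pmatrix}$, whose Jordan type depends on $v$. I would invoke the Jacobson--Morozov setup of Section~\ref{sect: Jordan}: extending $A|_V$ to an $\liesl_2(\IC)$-triple commuting with $J$ and writing $v = w + (A|_V) w'$ with $w$ of highest weight, I can replace $f_0$ by $f_0 - w'$ to assume that $v$ is either zero or a highest weight vector of a complex irreducible $V_{k_0}$ of complex dimension $j-1$. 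If $v = 0$, then $f_0$ simply contributes an additional Jordan block of size $1$, recovering the $j=1$ case of \eqref{eq: jordan A}. If $v \neq 0$, the real cyclic chain $f_0, v, Av, \dots, A^{j-2}v$ becomes a Jordan block of size $j$, while the ``twin'' chain $Jv, JAv, \dots, JA^{j-2}v$ inside $V_{k_0}$ survives as a single block of size $j-1$; all other complex irreducibles of $(V, J)$ still contribute two real blocks each. Counting blocks reproduces exactly \eqref{eq: jordan A} for $j \in I = \{i > 1 \mid q_{i-1} > 0\}$.

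For the converse one reverses the construction: given a partition of the form \eqref{eq: jordan A}, build $(V, J)$ with $A|_V$ of complex Jordan type $\sum_i q_i \cdot i$, choose $v \in V$ to be zero or a suitable highest weight vector, set $Af_0 := v$, and complete to $\lg$ by declaring $Je_0 := -f_0$ on the remaining generator. The integrability identities hold by construction. I expect the most delicate point to be the real-Jordan bookkeeping — verifying that turning on a non-zero $v$ replaces exactly one pair of $(j-1)$-blocks by a $(j-1)$-block together with a $j$-block, and that distinct choices of $j \in I \cup \{1\}$ yield non-conjugate $A$'s — which together account for the $|I|+1$ isomorphism classes claimed in the theorem.
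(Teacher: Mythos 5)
The paper does not actually prove this theorem: it is imported verbatim from \cite{ABDGH24}, so there is no in-paper proof to compare against. Judged on its own, your argument is essentially correct, and it runs on the same machinery the paper uses for Proposition \ref{prop: classification}: the decomposition $\gotha = (\gotha\cap J\gotha)\oplus \IR f_0$, the fact that integrability forces $A(V)\subset V$ and $[A|_V,J|_V]=0$ (quoted there from \cite{LRV}, re-derived by you from the Nijenhuis tensor), and the Jacobson--Morozov normalisation of $v=Af_0$ from Section \ref{sect: Jordan}. The block bookkeeping is right: the complex chain through the normalised $v$ contributes one real $j$-block (after prepending $f_0$) and one real $(j-1)$-block, all other complex blocks double, and the resulting multiplicity vectors $(m_i)$ for distinct $j\in I\cup\{1\}$ are pairwise distinct because the set of indices with odd multiplicity is $\{j,j-1\}$ (resp.\ $\{1\}$), which determines $j$; this gives the count $|I|+1$. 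One small inaccuracy: $N_J(X,f_0)$ for $X\in V$ is \emph{not} automatically zero --- it equals $JAX-AJX$, which is exactly the same condition as the one you extract from $N_J(e_0,X)=0$. Since you impose that condition anyway, nothing breaks, but the sentence claiming $(X,f_0)$ is automatic should be corrected. A second point worth making explicit in a final write-up is the justification (as in Section \ref{sect: Jordan}) that the vector $w$ with $Mw=0$ really can be taken as the starting vector of a Jordan block whose size is one of the existing part sizes $q_{j-1}>0$; this is what guarantees $j\in I$ rather than $j-1$ being an arbitrary integer.
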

 We call the index $j$ in \eqref{eq: jordan A} the size of the overlapping block.
  The reason for choosing this name will become clear in the sequel.

\begin{rem}
 Notice that following the same notation as that of Theorem \ref{thm: ABDGH}, the step of nilpotency of $\lg$ is given by $\max\{ \, i \,\,| \,\, m_i \neq 0\}=\max\{\,j\,,\,\max\{\, i \, | \, q_i\neq0\}\,\}$.    
\end{rem}

  \begin{rem} 
   The partition $n = n\cdot 1$ plays a special role: there are two possible corresponding  partitions of $A$, namely, $2n+1 = (2n+1) \cdot 1$ and $2n+1 = 2 + (2n-1) \cdot 1$. The first one corresponds to the abelian Lie algebra, which we exclude from now on. The second one is $\lg \isom \gothh_3\oplus \IR^{2n-1}$ which sometimes behaves differently, see Remark~\ref{rem: fibre bundle}.
  \end{rem}

Consider two Jordan partitions as in Theorem \ref{thm: ABDGH} and let $j$ be the size of the overlapping block. We define the following matrices: let $B\in \mathsf{M}_{n}(\mathbb{Z})$ be a lower triangular nilpotent matrix  with Jordan partition \eqref{eq: complex jordan B} and starting with a Jordan block of size $j-1$ if $j>1$; if $j=1$,  the order of Jordan blocks can be chosen arbitrarily. 
Set 
 \begin{equation}\label{eq: epsilon}     
  \epsilon = 
  \begin{cases} 1, & \text{if } j>1,\\ 0, & \text{if }j =1,
\end{cases} \end{equation}
and let 
\begin{equation}\label{eq: A}
  A =\begin{pmatrix}
         0 &  0  \\
         v& \tilde B
    \end{pmatrix}   
\in \mathsf{M}_{2n+1}(\mathbb{R}), \quad \text{where } v= \begin{pmatrix}
    \epsilon\\
    0\\
    \vdots \\
    0
\end{pmatrix}, \quad \tilde  B=\begin{pmatrix}
    B&  \\    
    &  B
   \end{pmatrix}.
\end{equation}
Note that $A$ is a matrix in Jordan normal form with Jordan partition \eqref{eq: jordan A} such that the first Jordan block has size $j$. 

We also define
\begin{equation} \label{eq: J_0} 
J_0= \left( \begin{array}{cclc}
       0&-1& &\\
       1& 0& & \\
        & &0 & -I_n \\
        & & I_n & 0
    \end{array}\right) \in \mathsf{M}_{2n+2}(\mathbb{R}), 
\end{equation}
which satisfies $J_0^2 = - \id$. 

\begin{prop}\label{prop: classification}
 Let $\lg$ be a nilpotent almost abelian Lie algebra with complex structure $J$ and let $j$ be the size of the overlapping block defined in Theorem \ref{thm: ABDGH}.
 
 Then there exists a basis  $e_0, \dots, e_{2n+1}$ for $\lg$ with respect to which  the codimension one  abelian ideal is  $\gotha = \langle e_1, \dots, e_{2n+1}\rangle $, the adjoint action  $\phi = \ad_{e_0}|_\gotha$ is represented by the matrix
 $A$ in \eqref{eq: A} with 
 $\epsilon$ as in \eqref{eq: epsilon} and the complex structure is represented by the matrix $J_0$ in \eqref{eq: J_0}.
In particular, if a nilpotent almost abelian Lie algebra admits a complex structure, then the complex structure is unique up to Lie algebra isomorphism.
\end{prop}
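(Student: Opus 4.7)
The plan is to put both $\phi = \ad_{e_0}|_\gotha$ and $J$ simultaneously into the standard forms $A$ and $J_0$ by an adapted choice of basis, with $\epsilon$ and the size $j$ of the overlapping block emerging naturally from the construction.

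Since $\gotha$ has odd real dimension $2n+1$, we have $J\gotha \neq \gotha$, so $\lu := \gotha \cap J\gotha$ is a $J$-invariant subspace of real codimension one in $\gotha$ and $\lg = \gotha + J\gotha$. Pick $e_0 \in J\gotha \setminus \gotha$ and set $e_1 := Je_0 \in \gotha \setminus \lu$, giving the decomposition $\gotha = \IR e_1 \oplus \lu$. Writing the Nijenhuis condition on pairs $(e_0, Y)$ with $Y \in \lu$ and using that $\gotha$ is abelian, one quickly finds $\phi(JY) = J\phi(Y)$; since both sides lie in $\gotha$, this forces $\phi(\lu) \subset \lu$. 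A short argument using nilpotency of $\phi$ then yields $\phi(e_1) \in \lu$ as well (otherwise $e_1$ would contribute a nonzero eigenvalue). Hence $(\lu, J|_\lu)$ is a complex vector space of dimension $n$ on which $N := \phi|_\lu$ is $\IC$-linear and nilpotent, and $u := \phi(e_1)$ lies in $\lu$.

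The crux is to normalize $u$ via the $\liesl_2$-framework of Section~\ref{sect: Jordan}. By Jacobson-Morozov, extend $N$ to an $\liesl_2$-triple $(H, N, M)$ on the complex vector space $\lu$ and decompose $u = w + Nw'$ with $Mw = 0$. Replacing $e_0$ by $e_0 + Jw'$ leaves $\phi$ untouched (the correction $\ad_{Jw'}$ vanishes on the abelian $\gotha$) and replaces $e_1$ by $e_1 - w'$, turning $u$ into $u - Nw' = w$. Thus we may assume $\phi(e_1) = w$ is either zero or a highest weight vector generating an irreducible $\liesl_2$-subrepresentation of some complex dimension $k$. In these two cases we set, respectively, $\epsilon = 0, j = 1$ and $\epsilon = 1, j = k+1$, so that the chain $e_1, w, Nw, \dots, N^{k-1}w$ is a Jordan chain of $\phi$ on $\gotha$ of length $j$, which is precisely the overlapping block predicted by Theorem~\ref{thm: ABDGH}.

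Completing $w$ to a $\IC$-Jordan basis $v_1 = w, v_2, \dots, v_n$ of $(\lu, J|_\lu)$ that begins with the irreducible summand generated by $w$, and setting $e_{1+\ell} := v_\ell$ and $e_{n+1+\ell} := Jv_\ell$ for $\ell = 1, \dots, n$, produces a basis of $\lg$ in which $J$ is represented by $J_0$ and $\phi$ by $A$, with the Jordan partition of Theorem~\ref{thm: ABDGH} falling out automatically. Uniqueness up to Lie algebra isomorphism is then immediate: given two complex structures $J, J'$ on $\lg$, the construction produces bases putting $(\phi, J)$ and $(\phi', J')$ in the same normal form $(A, J_0)$, and the linear map identifying the two bases intertwines $J$ with $J'$ and preserves the structure constants (which are fully determined by $A$ together with the abelian condition on $\gotha$), hence is a Lie algebra automorphism. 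The main obstacle is the $\liesl_2$-normalization of $\phi(e_1)$; once that is in place, the rest is linear-algebra bookkeeping.
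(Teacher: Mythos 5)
Your proposal is correct and follows essentially the same route as the paper: the subspace you call $\lu$ is the paper's $\gothb=\gotha\cap J\gotha$, the $\liesl_2$/Jacobson--Morozov normalisation of $\phi(e_1)$ is exactly the paper's key step, and the two cases $w=0$, $w\neq 0$ and the resulting basis coincide with the paper's construction. The only (harmless) difference is that you re-derive from the Nijenhuis condition the fact that $\phi$ preserves $\gothb$ and commutes with $J$ there, which the paper simply quotes from the literature.
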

\begin{proof}
Note that the adjoint action $\ad_{e_0}$ depends only on the class of $e_0$ modulo $\gotha$, so $\phi$ can only change by a non-zero multiple if we pick a different vector $e_0\in \lg \setminus \gotha$. 

Let $\gothb = \gotha \cap J\gotha$  and pick an arbitrary vector $u \in \gotha \setminus \gothb$. 
We recall from \cite[\S6]{LRV} that  the integrability of the complex structure $J$ implies that $\phi$ preserves $\gothb$ and its restriction $\phi|_\gothb$ commutes with $J|_\gothb$.
  So we can and will consider $\phi|_\gothb$ as a $\IC$-linear transformation of the complex vector space $(\gothb, J|_\gothb)$.

 Set $v = \phi(u) $ and note that $v\in \gothb$. By the discussion in Section \ref{sect: Jordan} we can write 
  \[ \phi(u) =v = w + \phi(w')\]
  such that $w$ can be chosen as a starting vector of a Jordan basis for $\phi|_\gothb$ if $w\neq 0$.
  
We define $e_1 = u-w'$, so that $\phi(e_1) = w$ and let $e_0 = -Je_1$.
Now we distinguish two cases:
 \begin{prooflist}
  \item[Case 1: $\phi(e_1)=w\neq 0$]
    By the discussion in Section \ref{sect: Jordan} we can 
    pick a complex Jordan basis 
    \[ v_1 = \phi(e_1) ,\; v_2, \dots, v_n\]
   with respect to which $\phi|_\gothb$ is represented by a Jordan block matrix $B'$.  

 Now consider the vectors 
 \[e_2 = v_1, \dots, e_{n+1} = v_n, \; e_{n+2} = iv_1, \dots,e_{2n+1} = i v_n \in \gothb.\]
 These form a real basis and if we consider $\phi|_\gothb$ as an $\IR$-linear map, the representing matrix with respect to this basis is
 \[ \mat{  B'& 0  \\
   0  & B'}.
     \]
Since $\phi(e_1)= e_2$ by definition, the map $\phi$ is represented with respect to the chosen basis by the Jordan block matrix $A$ with $\epsilon=1$ , where $B$ is replaced by $B'$. Comparing the Jordan partitions of  $A$, of $B$ and of $B'$,  we have $B = B'$ up to reordering of the Jordan blocks except the first one, as claimed.  
By definition the complex structure acts by the matrix $J_0$.

\item[Case 2: $\phi(e_1) =0$]
As in the previous case,  if $v_1, \dots, v_n$ is a complex Jordan basis, then 
\[ e_0, \;  e_1, \; e_2 = v_1, \dots, e_{n+1} = v_n,\;  e_{n+2} = iv_1, \dots,e_{2n+1} = i v_n\]
is a basis with the required properties. \qedhere
 \end{prooflist}
\end{proof}

\begin{rem}\label{rem: holomorphic fibre bundle}
    As stated in Remark \ref{rem: fibre bundle}, the codimension one abelian ideal of a nilpotent almost abelian Lie algebra $\lg \not\cong \lh_3\oplus \IR^{2n-1}$ is always rational. By a construction similar to \cite[Example~1.14]{rollenske09} one shows that this does not hold for the codimension two ideal $\gothb$ defined in the proof above. Nevertheless, we will see in the next section that almost abelian complex nilmanifolds have the structure of an  iterated principal holomorphic torus bundle.
\end{rem}

It is often useful to encode both the Lie algebra and the complex structure in the differential of the forms of type $(1,0)$.
\begin{cor}\label{cor: structure eq}
 Let $(\lg_A, J)$ be a nilpotent almost abelian Lie algebra with complex structure and with $\epsilon$ as in~\eqref{eq: A}. Let $j$ be the size of the overlapping block in the sense of Theorem \ref{thm: ABDGH} and write the partition \eqref{eq: complex jordan B} as
 \[ n = k_0 + k_1 + \dots + k_r , \qquad  k_i >0, \;\;  i= 1, \ldots , r, \]
 with $k_0 = j-1\geq 0$. 
 \begin{enumerate}
     \item If $\varepsilon=0$ (equivalently $j=1$), then there exists a basis
 \[\alpha,\beta_1^1,\dots,\beta_{k_1}^1,\dots, \beta_1^r, \dots, \beta_{k_r}^r\]
 of $\einsnull{\lg^*}$ such that  $d\alpha =0$ and
 \begin{equation*}
     d\beta^\ell_i =\begin{cases}
            0, & \text{for $i=1$},\\
         (\alpha + \overline\alpha) \wedge \beta^\ell _{i-1} \qquad &\text{for $i>1$}.
     \end{cases}
 \end{equation*}
\item If $\varepsilon =1$, then there exists a basis 
 \[\alpha,\beta_1^0, \dots,\beta_{k_0}^0, \dots, \beta_1^r, \dots, \beta_{k_r}^r\]
 of $\einsnull{\lg^*}$
such that $d\alpha  = 0$ and 
 \begin{equation*} 
d\beta^\ell_i  = \begin{cases}
              0, &  \text{for $\ell>0$ and $i=1$},\\
              (\alpha + \overline\alpha) \wedge \beta^\ell _{i-1} \qquad &\text{for $i>1$},\\
              \alpha\wedge \overline \alpha , & \text{for $\ell = 0$ and $i=1$}.
             \end{cases}\notag
\end{equation*}
\end{enumerate}
\end{cor}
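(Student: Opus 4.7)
The plan is to use Proposition~\ref{prop: classification} as the starting point: fix the real basis $e_0,\dots,e_{2n+1}$ of $\lg$ in which $\ad_{e_0}$ is given by the matrix $A$ of~\eqref{eq: A} and the complex structure by $J_0$ of~\eqref{eq: J_0}. Since $\gotha$ is abelian, the only non-trivial brackets are $[e_0, e_k] = Ae_k$, so the Chevalley--Eilenberg differential is encoded entirely by $A$. In particular, $Ae_1 = \epsilon e_2$, and for $k\geq 2$ the image $Ae_k$ is governed by the Jordan-block shift inside the two parallel copies of $B$ that make up $\tilde B$.

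The next step is to write down explicit $(1,0)$-forms. I would relabel the real basis of $\gotha$ according to the Jordan block decomposition of $\tilde B$: let $f_1,\dots,f_n$ and $g_1,\dots,g_n$ be the two parallel copies of the $B$-basis, grouped so that the $\ell$-th block of size $k_\ell$ acts on $f_{N_\ell+1},\dots,f_{N_\ell+k_\ell}$ and on $g_{N_\ell+1},\dots,g_{N_\ell+k_\ell}$, where $N_\ell=\sum_{m<\ell}k_m$. Since $J_0$ sends $e_0\mapsto e_1$ and $f_s\mapsto g_s$, the natural candidates for a basis of $\einsnull{\lg^*}$ are
\[
\alpha = c_0\bigl(e^0+ie^1\bigr),\qquad
\beta^\ell_i = c^\ell_i\bigl((f_{N_\ell+i})^* + i(g_{N_\ell+i})^*\bigr),
\]
with scalars $c_0, c^\ell_i \in \IC^*$ to be fixed later. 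These automatically annihilate $\lg^{0,1}$ and so form a $(1,0)$-basis.

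All remaining computations use $d\xi(X,Y) = -\xi([X,Y])$ together with the matrix structure of $A$. First, $d\alpha=0$ because the first row of $A$ vanishes, so no bracket has a component in $\langle e_0,e_1\rangle$. Second, for $i>1$ the identity $Bf_{N_\ell+i-1}=f_{N_\ell+i}$ (and its $g$-counterpart) produces exactly two non-zero components of $d\beta^\ell_i$, both proportional to the corresponding components of $e^0\wedge\beta^\ell_{i-1}$; matching them collapses into a single scalar recursion linking $c^\ell_i$, $c^\ell_{i-1}$ and $c_0$, which, once solved, yields the stated equation $d\beta^\ell_i = (\alpha+\overline\alpha)\wedge\beta^\ell_{i-1}$. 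Third, when $\epsilon=1$, the extra entry $Ae_1 = e_2 = f_1$ forces a non-zero $e^0\wedge e^1$ component in $d\beta^0_1$; matching this against $\alpha\wedge\overline\alpha$ fixes $c^0_1$ in terms of $|c_0|^2$, producing the remaining equation $d\beta^0_1 = \alpha\wedge\overline\alpha$.

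I do not expect a genuine obstacle: once the basis is set up the rest is bookkeeping. The only point requiring care is the simultaneous solvability of the various scalar constraints on $c_0$ and the $c^\ell_i$; these are decoupled except for a recursion inside each Jordan block and one extra normalisation in the overlapping block, and so can always be met in $\IC^*$. The two cases $\epsilon=0$ and $\epsilon=1$ then run in parallel and yield precisely the two structure-equation patterns stated in the corollary.
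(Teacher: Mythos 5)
Your proposal is correct and follows essentially the same route as the paper: starting from the basis of Proposition \ref{prop: classification}, forming the $(1,0)$-forms $e^0+ie^1$ and $(f_s)^*+i(g_s)^*$ grouped by Jordan blocks, and fixing normalising scalars so that the differentials match $(\alpha+\overline\alpha)\wedge\beta^\ell_{i-1}$ and $\alpha\wedge\overline\alpha$. The only difference is cosmetic: the paper writes down explicit constants (e.g.\ $\alpha=-\tfrac12(e^0+ie^1)$, $\beta^0_1=2i(e^2+ie^{2+n})$) rather than solving for them, and your recursion for the $c^\ell_i$ is indeed consistently solvable as you claim (noting that $c_0$ must be taken real so that $\alpha+\overline\alpha$ is proportional to $e^0$).
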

\begin{proof}
Let $e_0,\dots,e_{2n+1}$ be a basis for $\lg$ as constructed in the proof of Proposition \ref{prop: classification}. The bracket relations of this basis are expressed by the matrix $A$ given in~\eqref{eq: A}.
Starting with the case $\varepsilon =0$ the only non-zero brackets are $[e_0,e_{k-1}]= \mu_k e_{k}$, where $\mu_k\in \{0,1\}$ and $\mu_k=0$ precisely if the $k$-th row of $A$ corresponds to the first row of a Jordan block in $A$, in particular $[e_0,e_1]=0$. Hence, for the dual basis $e^0,\dots,e^{2n+1}$ of $e_0,\dots,e_{2n+1}$ we get $de^{k} = -\mu_k e^0 \wedge e^{k-1}$ and since the matrix $\tilde{B}$ in \eqref{eq: A} consists of two identical blocks we also have $\mu_k = \mu_{k+n}$ for $k \geq 2$. With respect to the complex structure $J_0$ a basis of $\einsnull{\lg^*}$ is now given by the forms
\[e^0+ie^1, e^2+ie^{n+2}, \dots , e^{n+1}+ie^{2n+1}.\]
We define $\alpha$ to be the closed form $\alpha = -\tfrac{1}{2}(e^0+ie^1)$ and compute
\[
d(e^k +ie^{k+n}) = -\mu_k(e^0 \wedge e^{k-1} +ie^0 \wedge e^{k+n-1}) = \mu_k(\alpha + \overline{\alpha}) \wedge (e^{k-1}+ie^{k+n-1}).
\]
Now, we set the forms $e^k +ie^{k+n}$ with $\mu_k=0$ corresponding to the first row of the $\ell$-th Jordan block of $B$ in~\eqref{eq: A} as $\beta_1^\ell$ and continue this with $e^{k+m} +ie^{k+n+m} =\beta_{m+1}^\ell$ for every $m$ smaller then the size of the Jordan block. Repeating this for every Jordan block of $B$ we obtain the desired structure equations.

The case $\varepsilon=1$ is done in the same fashion, except there is an additional non-zero bracket given by $[e_0,e_1] =e_2$. If $\alpha = e^0+ie^1$, we get $d(e^2+ ie^{2+n}) = \frac{1}{2i} \alpha \wedge \overline{\alpha}$. Setting $\beta_1^0 =2i(e^2+ ie^{2+n})$ and repeating the same argument as in the previous case we get the structure equations given above.
\end{proof}

\begin{rem}\label{rem: structure eq}
 From the structure equations in Corollary \ref{cor: structure eq} we see that a complex structure on $\lg_A$ is always nilpotent in the sense of \cite{Nilpotent} and never complex parallelisable. It is abelian if and only if the commutator is of dimension one, that is $\lg_A$ is isomorphic to a direct sum of $\lh_3$ and an abelian Lie algebra.
\end{rem}

\subsection{Proof of Theorem \ref{thm: stbs} $(i)$ and $(ii)$}
We start with a consequence of the classification above.
\begin{cor}\label{cor: stbs}
Let $(\lg_A, J)$ be a nilpotent almost abelian Lie algebra with complex structure, Jordan partitions as in Theorem \ref{thm: ABDGH}, and size of the overlapping block $j$. Let $\nu$ be the size of the largest Jordan block, that is, the nilpotency index of $\lg_A$.
 
 Then the filtration 
  \[ 0 = \kz^0\lg \subset \kz^1\lg \subset \dots \subset \kz^{j-1}\lg \subset \kz^{j-1}\lg +\kc^{\nu-j}\lg \subset \dots \subset \kz^{j-1}\lg +\kc^1\lg \subset \lg\]
  is a stable principal torus bundle series for $\lg = \lg_A$. 
 \end{cor}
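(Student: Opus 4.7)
My plan is to work in the canonical basis $e_0,\ldots,e_{2n+1}$ provided by Proposition~\ref{prop: classification}, in which the bracket is encoded by the matrix $A$ of~\eqref{eq: A} and the complex structure by the matrix $J_0$ of~\eqref{eq: J_0}. Since the subalgebras $\kz^i\lg$ and $\kc^k\lg$ are characteristic, they are preserved by every Lie algebra automorphism. Consequently, the rationality condition in Definition~\ref{def: stbs} will follow directly from the cited rationality of the ascending and descending central series with respect to any rational structure, and once $J_0$-invariance is established for the model $(\lg,J_0)$, Proposition~\ref{prop: classification} transports it along the isomorphism matching an arbitrary complex structure $J$ with $J_0$, yielding the stability statement automatically.

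The heart of the argument is the $J$-invariance of each filtration step. The key observation is that $J_0$ pairs the basis vectors as $e_0\leftrightarrow e_1$ and $e_k\leftrightarrow e_{k+n}$ for $2\leq k\leq n+1$, while the Jordan blocks of $A$ lying inside $\gotha$ come in two flavors: matched pairs of identical blocks, one from each of the two copies of $B$ in $\tilde B$, which $J_0$ interchanges with one another, and, if $j>1$, a single mismatched pair formed by the first block of size $j$ on $e_1,\ldots,e_j$ together with the unpaired second-copy block of size $j-1$ on $e_{n+2},\ldots,e_{n+j}$. On any Jordan block of size $m$ the intersection with $\kz^i\lg$ (respectively $\kc^k\lg$) is spanned by a terminal segment of the block basis, and on a matched pair these contributions agree on both copies, so they are automatically $J_0$-invariant.

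The only non-trivial computation happens on the mismatched pair, where one checks that for every $i\leq j-1$ the subspace $\kz^i\lg$ meets the first block in $\langle e_{j-i+1},\ldots,e_j\rangle$ (never containing $e_1$) and the second-copy block in $\langle e_{n+j-i+1},\ldots,e_{n+j}\rangle$; these two terminal segments are exchanged by $J_0$, which gives $J_0$-invariance of $\kz^i\lg$ for $i\leq j-1$. Adding $\kc^k\lg$ for $k\geq 1$ can only extend the contribution on the first block and the second-copy block by terminal segments that are already contained in $\kz^{j-1}\lg$, because $\kz^{j-1}\lg$ already equals the \emph{entire} second-copy block of size $j-1$; hence $\kz^{j-1}\lg+\kc^k\lg$ inherits $J_0$-invariance. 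The degenerate case $j=1$, where there is no mismatched pair, the first block is the isolated $\langle e_1\rangle$ and $\kz^{j-1}\lg=0$, is handled verbatim by the matched-pair argument together with the observation that $\kc^k\lg$ has zero component along $e_1$ for $k\geq 1$.

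Finally, the centrality of the successive quotients is formal: for the ascending part, $\kz^{i+1}\lg/\kz^i\lg$ lies in the centre of $\lg/\kz^i\lg$ by definition of the ascending central series; for each descending step, $(\kz^{j-1}\lg+\kc^k\lg)/(\kz^{j-1}\lg+\kc^{k+1}\lg)$ is central in $\lg/(\kz^{j-1}\lg+\kc^{k+1}\lg)$ because $[\kc^k\lg,\lg]=\kc^{k+1}\lg$, and the final quotient $\lg/(\kz^{j-1}\lg+\kc^1\lg)$ is abelian as it is a quotient of $\lg/\kc^1\lg$. The main obstacle is thus not rationality or centrality but the careful bookkeeping of basis vectors on the mismatched block pair, which the explicit form of $A$ and $J_0$ in the canonical basis makes tractable.
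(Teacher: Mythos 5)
Your argument is correct, and the reduction to the model $(\lg,J_0)$ via Proposition~\ref{prop: classification} together with the matched-pair observation $J_0(e_k)=e_{k+n}$ is exactly how the paper begins; for $\epsilon=0$ the two proofs essentially coincide. Where you diverge is the case $\epsilon=1$: the paper shows only that $J$ preserves the centre $\kz\lg$, then passes to the quotient $\lg/\kz\lg$, which is again almost abelian with complex structure and Jordan partition $\sum_i m_i\cdot(i-1)$, and inducts until it reaches $j=1$; you instead compute every term of the filtration directly in the canonical basis, using that $\kz^i\lg=\ker\phi^i$ and $\kc^k\lg=\operatorname{im}\phi^k$ cut out terminal segments of each Jordan block, and check by hand that on the single mismatched pair the subspace $\kz^{j-1}\lg$ absorbs the asymmetry (it contains the whole second-copy block of size $j-1$ and the segment $\langle e_2,\dots,e_j\rangle$ of the size-$j$ block), so that adding $\kc^k\lg$ never destroys $J_0$-invariance. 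Your direct route has the advantage of exhibiting each filtration step explicitly and of making transparent why the plain descending central series fails to be $J$-invariant when $\epsilon=1$, at the cost of more bookkeeping; the paper's inductive quotient argument is shorter but less explicit. Two small points you should make explicit if you write this up: the identification of $\kz^i\lg$ with $\ker\phi^i$ for $i\le j-1$ (which uses $j\le\nu$), and the junction step of the centrality condition, namely $[\kc^{\nu-j}\lg,\lg]=\kc^{\nu-j+1}\lg\subset\ker\phi^{j-1}=\kz^{j-1}\lg$, since your phrase ``because $[\kc^k\lg,\lg]=\kc^{k+1}\lg$'' only covers the steps between consecutive $\kc$-terms.
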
 
\begin{proof}
The filtration is defined over $\IQ$ with respect to any rational structure, because it is defined in terms of the Lie bracket, see \cite[p. 208]{Cor-Green}.

It remains to show $J$-invariance for every complex structure $J$. Since every Lie algebra isomorphism preserves the above filtration it follows from Proposition~\ref{prop: classification} that we can assume $J= J_0$. Choose a basis $e_0,\dots,e_{2n+1}$ of $\lg$ as constructed in Proposition \ref{prop: classification}.

First, we consider the case $\epsilon = 0$, that is, we have $j=1$. The descending central series of $\lg$ is given by the matrix $A$ in (\ref{eq: A}), where $e_i$ is an element of $\kc^k\lg$ if and only if all rows from the $(i-k+1)$-th row to the $i$-th row are non-zero. In particular, the vector $e_2$ is not in the commutator ideal for $\varepsilon =0$. Suppose $e_i$ is an element of $\kc^k\lg$, then so is $e_{i+n}$ and since $J(e_i) = e_{i+n}$  for $J=J_0$ and $i=2,\dots,n+1$, the complex structure $J$ has to preserve the descending central series.

Now assume that $j>1$, that is, $\epsilon =1$. In this case $e_2$ is in the commutator of $\lg$ and thus the commutator is no longer $J$-invariant. However, since $e_1$ is not in the centre of $\lg$ if $\varepsilon =1$ and again $J(e_i) = e_{i+n}$  for $i=2,\dots,n+1$, the complex structure preserves the centre $\kz\lg$. We can now consider $\lg/\kz\lg$, which is an almost abelian Lie algebra with complex structure and associated partition $\sum_i m_i\cdot (i-1)$. Repeating the same argument for the Lie algebra $\lg/\kz\lg$ we obtain an almost abelian Lie algebra with complex structure and associated partition $\sum_i m_i\cdot (i-2)$. Proceeding by induction, we reach the case where $j=1$ and we get the claimed $J$-stable filtration.

The last condition of Definition \ref{def: stbs} is clear. 
\end{proof}

Now let $X$ be an almost abelian complex nilmanifold. 
Combining Corollary \ref{cor: stbs} with Proposition \ref{prop: stbs}, we get the desired geometric description of $X$ as an iterated principal holomorphic torus bundle. 

To deduce the description of the Dolbeault cohomology via left-invariant forms, we can apply the results of Console and Fino \cite[Section~5]{con-fin01}, see also \cite[Theorem~2]{rollenske11}. Alternatively, we can use the fact that the complex structure is nilpotent (Remark \ref{rem: structure eq})  and apply the main theorem of \cite{rtx20}. This finishes the proof of Theorem \ref{thm: stbs} $(i)$ and $(ii)$. \qed

\section{Cohomology, Fr\"olicher spectral sequence, and deformations}\label{sect: computation}
Throughout this section we fix an almost abelian complex nilmanifold $X=(M,J)$ and the associated Lie algebra $\lg =\lg_A$ with matrix $A$ as in (\ref{eq: A}). Also, as in the previous section we denote a codimension one abelian ideal of $\lg$ by $\gotha$ and let $\gothb = J\gotha \cap \gotha$.

\subsection{Consequences of the Hochschild-Serre spectral sequence}

The aim is to compute the cohomological invariants of the complex manifold $X=(M,J)$. We will start with general considerations and explain later how to explicitly compute Betti numbers and Hodge numbers in any particular case.

Let us start with the de Rham cohomology and consider the exact sequence 
\[ 0 \to \gotha \to \lg \to \gothh \to 0\]
 so that $\gothh$ is the real 1-dimensional abelian Lie algebra.
 \begin{prop}\label{prop: deRham}
  The (real) de Rham cohomology of the almost abelian nilmanifold $M$ is \[ H^k_\text{dR}(M) = H^k(\lg, \IR) \isom H^0(\lh, \Wedge^k \gotha^*) \oplus H^1(\lh, \Wedge^{k-1}\gotha^*),\]
where the right-hand side is the Lie algebra cohomology for the $\lh$-module structure on $\gotha$ given by the matrix $A$. 
 \end{prop}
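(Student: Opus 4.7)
The strategy is standard: combine Nomizu's theorem with the Hochschild–Serre spectral sequence for the short exact sequence of Lie algebras $0 \to \gotha \to \lg \to \lh \to 0$, and exploit that $\lh$ is one-dimensional so that the spectral sequence has only two non-zero columns and therefore degenerates for trivial reasons.

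In more detail, the first equality $H^k_\text{dR}(M) = H^k(\lg,\IR)$ is precisely the content of Nomizu's theorem quoted in Remark~\ref{rem: nomizu}. For the isomorphism on the right, I would invoke the Hochschild–Serre spectral sequence associated with the ideal $\gotha \lhd \lg$ and the trivial $\lg$-module $\IR$, which takes the form
\[ E_2^{p,q} = H^p\bigl(\lh,\,H^q(\gotha,\IR)\bigr) \Longrightarrow H^{p+q}(\lg,\IR). \]
Since $\gotha$ is abelian and acts trivially on $\IR$, its Chevalley–Eilenberg complex has zero differential and $H^q(\gotha,\IR) = \Wedge^q\gotha^*$. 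The residual $\lh$-module structure on $\Wedge^q\gotha^*$ is the one induced by the adjoint action of any lift of a generator of $\lh$ to $\lg$; modulo $\gotha$ this is independent of the lift, and after fixing $e_0 \notin \gotha$ it is represented by the matrix $A$, as required.

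Because $\dim_\IR \lh = 1$, the Chevalley–Eilenberg complex of $\lh$ is concentrated in degrees $0$ and $1$, so $E_2^{p,q} = 0$ whenever $p \geq 2$. All higher differentials $d_r\colon E_r^{p,q} \to E_r^{p+r,\,q-r+1}$ with $r \geq 2$ either start from a zero group (for $p=0$) or land in one (for $p=1$), hence vanish. Thus $E_2 = E_\infty$, and the filtration on $H^k(\lg,\IR)$ has length two, giving a short exact sequence
\[ 0 \to H^1\bigl(\lh,\,\Wedge^{k-1}\gotha^*\bigr) \to H^k(\lg,\IR) \to H^0\bigl(\lh,\,\Wedge^k\gotha^*\bigr) \to 0. \]
As this is a sequence of finite-dimensional real vector spaces, it splits, yielding the claimed isomorphism.

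There is no real obstacle here beyond being careful about one point: verifying that the $\lh$-module structure appearing on the $E_2$-page is indeed the one described by $A$ in the statement. This is just the standard computation of the action in the Hochschild–Serre set-up and boils down to unwinding definitions, but it is the only place where a check is needed; everything else is formal.
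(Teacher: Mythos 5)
Your proposal is correct and follows essentially the same route as the paper: Nomizu's theorem plus the Hochschild--Serre spectral sequence for $0 \to \gotha \to \lg \to \lh \to 0$, with degeneration at $E_2$ forced by $\dim\lh = 1$ and $H^q(\gotha,\IR) \isom \Wedge^q\gotha^*$ since $\gotha$ is abelian. Your extra care in splitting the two-step filtration as a sequence of vector spaces and in identifying the $\lh$-action with $A$ is a welcome elaboration of details the paper leaves implicit.
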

 \begin{proof}
 This is either the Leray-Hirsch theorem applied to the fiber bundle $M\to S^1$ explained in Remark \ref{rem: fibre bundle} or the Hochschild-Serre spectral sequence \cite[Section~7.5]{Weibel} in Lie algebra cohomology combined with Nomizu's theorem; see Remark~\ref{rem: nomizu}. Since $\dim \lh= 1$, the Hochschild-Serre spectral sequence degenerates at the second page $E_2^{r,s} = H^r(\lh, H^s(\gotha, \IR))$ and $E_2^{r,s}=0$ for $r\geq 2$. Therefore,
 \[
 H^k(\lg,\IR) \cong H^0(\lh, H^k(\gotha, \IR)) \oplus H^1(\lh, H^{k-1}(\gotha, \IR)),
 \]
 and since the Lie algebra $\gotha$ is abelian we have $H^s(\gotha, \IR)\isom \Wedge^s\gotha^*$.
\end{proof}

Now let us compute the Dolbeault cohomology groups. For this consider the 1-dimensional complex Lie algebra $(\gothf  = \lg/\gothb,J)$. Since $\gothb$ is $J$-invariant, we get an  exact sequence of complex Lie algebras
\begin{equation}\label{eq: mod b} 
0 \to \nulleins\gothb \to \nulleins\lg \to \nulleins\gothf \to 0.
 \end{equation}

As explained in Section \ref{sect: Lie algebra cohomology}, to compute the Lie algebra Dolbeault cohomology we need to consider the $\nulleins{\lg}$-module $\Wedge^p\einsnull{\lg^*}$, where the action is given by 
\begin{equation}\label{eq: action}
 \nulleins{\lg}\times \einsnull{\lg^*} \to \einsnull{\lg^*},\qquad  (\overline X, \gamma) \mapsto-\overline X\intprod \delbar\gamma.
\end{equation}

\begin{prop}\label{prop: Dolbeault}
The Dolbeault cohomology groups of the almost abelian complex nilmanifold $X$ can be computed as
\begin{align*}
 H^{p,q}_{\delbar}(X) & \isom H^q(\nulleins{\lg} , \Wedge^p \einsnull{\lg^*})\\
 & \isom H^0(\nulleins\gothf, \Wedge^q\nulleins{\gothb^*}\tensor \Wedge^p \einsnull{\lg^*})\oplus H^1(\nulleins\gothf, \Wedge^{q-1}\nulleins{\gothb^*}\tensor \Wedge^p \einsnull{\lg^*}). 
 \end{align*}
 \end{prop}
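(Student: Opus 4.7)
The plan is to combine the statement of Theorem \ref{thm: stbs} $(ii)$ with a Hochschild-Serre argument analogous to the one already used in the proof of Proposition \ref{prop: deRham}, but applied to the complex short exact sequence \eqref{eq: mod b} instead of to $0\to \gotha \to \lg \to \lh \to 0$.

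The first isomorphism is immediate: by Theorem \ref{thm: stbs} $(ii)$ we have $H^{p,q}_{\delbar}(X)\cong H^{p,q}(\lg,J)$, and by the definition recalled in Section \ref{sect: Lie algebra cohomology} the latter equals $H^q(\nulleins{\lg},\Wedge^p \einsnull{\lg^*})$. So the content of the proposition is the second isomorphism. For this, I would apply the Hochschild-Serre spectral sequence in Lie algebra cohomology \cite[Section~7.5]{Weibel} to the exact sequence \eqref{eq: mod b} of complex Lie algebras with coefficient module $\Wedge^p \einsnull{\lg^*}$. This gives
\[ E_2^{r,s} = H^r\bigl(\nulleins\gothf,\, H^s(\nulleins\gothb, \Wedge^p \einsnull{\lg^*})\bigr) \Rightarrow H^{r+s}(\nulleins{\lg}, \Wedge^p \einsnull{\lg^*}). \]
Since $\dim_\IC \nulleins\gothf = 1$, the rows for $r\geq 2$ vanish, so the spectral sequence degenerates at the $E_2$-page and $H^q(\nulleins\lg,\Wedge^p\einsnull{\lg^*}) \cong E_2^{0,q}\oplus E_2^{1,q-1}$, in perfect analogy with Proposition \ref{prop: deRham}.

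The key step — and the main technical point — is to identify the $E_2$-page with the terms appearing in the statement. Since $\gothb\subset \gotha$ and $\gotha$ is abelian, the Lie subalgebra $\nulleins\gothb\subset \nulleins\lg$ is abelian; thus it suffices to show that the $\nulleins\gothb$-module structure on $\einsnull{\lg^*}$ described in \eqref{eq: action} is \emph{trivial}, for then
\[ H^s(\nulleins\gothb,\Wedge^p\einsnull{\lg^*}) \;\cong\; \Wedge^s \nulleins{\gothb^*}\tensor \Wedge^p \einsnull{\lg^*}\]
and substituting into the Hochschild-Serre formula yields exactly the stated decomposition. Triviality of the action is where Corollary \ref{cor: structure eq} enters: for each generator $\gamma\in\{\alpha,\beta^\ell_i\}$ of $\einsnull{\lg^*}$, the form $\delbar\gamma$ is either zero or a wedge product containing the factor $\overline\alpha$ (or, in the exceptional case $\beta_1^0$ with $\varepsilon=1$, the factor $\alpha\wedge\overline\alpha$). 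In the basis $e_0,\dots,e_{2n+1}$ of Proposition \ref{prop: classification} one checks that $\gothb=\langle e_2,\dots,e_{2n+1}\rangle$, so both $\alpha$ and $\overline\alpha$ vanish on $\gothb_\IC$. Hence for $\overline X\in\nulleins\gothb$ the contraction $\overline X\intprod \delbar\gamma$ vanishes, because after applying the Leibniz rule every surviving term carries a $(1,0)$-form contracted against a $(0,1)$-vector.

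I expect the only real obstacle to be this triviality computation; once it is in place the rest is a formal application of Hochschild-Serre together with the one-dimensionality of $\nulleins\gothf$. It may be cleanest to do the computation on generators of $\einsnull{\lg^*}$ first and then extend to $\Wedge^p\einsnull{\lg^*}$ by the derivation property of $\overline X\intprod(-)$ and of $\delbar$, handling the two cases $\varepsilon=0$ and $\varepsilon=1$ separately but essentially identically.
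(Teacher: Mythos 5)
Your proposal is correct and follows essentially the same route as the paper's own proof: the first isomorphism from Theorem \ref{thm: stbs}~$(ii)$, then the Hochschild--Serre spectral sequence for \eqref{eq: mod b} with coefficients in $\Wedge^p \einsnull{\lg^*}$, degeneration at $E_2$ because $\dim \nulleins{\gothf}=1$, and the identification of $H^s(\nulleins{\gothb},\Wedge^p\einsnull{\lg^*})$ via the triviality of the $\nulleins{\gothb}$-action coming from Corollary \ref{cor: structure eq}. The only difference is that the paper simply asserts this triviality from the structure equations, whereas you spell out the contraction argument (correctly, including the exceptional term $\alpha\wedge\overline\alpha$ for $\varepsilon=1$).
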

\begin{proof}
By Theorem \ref{thm: stbs} the Dolbeault cohomology of $X$ is computed by left-invariant forms, that is, we have $H^{p,q}_{\delbar}(X) \isom H^q(\nulleins{\lg} , \Wedge^p \einsnull{\lg^*})$. To obtain the second isomorphism we apply the Hochschild-Serre spectral sequence \cite[Section~7.5]{Weibel} to the exact sequence \eqref{eq: mod b} giving us the $E_2$-page as 
 \[ E_2^{r,s} = H^r(\nulleins{\gothf}, H^s(\nulleins{\gothb},  \Wedge^p \einsnull{\lg^*})) \; \implies H^{r+s}(\nulleins\lg , \Wedge^p \einsnull{\lg^*}).\]
Note that by the structure equations given in Corollary \ref{cor: structure eq} the action given by \eqref{eq: action} of $\lg^{0,1}$ restricted to the subalgebra $\nulleins{\gothb}$ on $\Wedge^p\einsnull{\lg^*}$ is trivial and since $\nulleins{\gothb}$ is abelian the Chevalley-Eilenberg complex gives us
\[ H^s(\nulleins{\gothb},  \Wedge^p \einsnull{\lg^*}) = \Wedge^s\nulleins{\gothb^*}\tensor  \Wedge^p \einsnull{\lg^*}.\]

As in the proof of Proposition \ref{prop: deRham} since $\gothf^{0,1}$ is of dimension one, we have $ E_2^{r,s} =0$ for $r\geq 2$, so the spectral sequence has to degenerate at $E_2$. This gives the claimed isomorphism.
\end{proof}
Note that  in general the spectral sequence used in the proof is purely algebraic and not the Leray-Serre spectral sequence of a holomorphic fibration, see Remark~\ref{rem: holomorphic fibre bundle}.

We can now prove the third item of Theorem \ref{thm: stbs}.
\begin{thm}\label{thm: Froelicher}
 Let $X$ be an almost abelian complex nilmanifold. Then we have a (non-canonical) isomorphism
 \[ H^k(X, \IC) \isom \bigoplus_{p+q = k} H_{\delbar}^{p,q}(X).\]
 In particular, the Betti numbers equal the sum of the Hodge numbers and the Fr\"olicher spectral sequence degenerates at the first page.
\end{thm}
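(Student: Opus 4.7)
The plan is to reduce to the Lie algebra level and reformulate degeneration as a Jordan-partition identity. By Theorem~\ref{thm: stbs}(ii) and Nomizu's theorem (Remark~\ref{rem: nomizu}), the inclusions $H^{p,q}(\lg,J)\into H^{p,q}_{\delbar}(X)$ and $H^k(\lg,\IC)\into H^k(X,\IC)$ are isomorphisms. Since the Frölicher spectral sequence always satisfies $b_k\leq\sum_{p+q=k}h^{p,q}$ with equality characterising $E_1$-degeneration, it suffices to verify this equality at the Lie-algebra level.

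Applying Propositions~\ref{prop: deRham} and \ref{prop: Dolbeault}, together with the fact that a nilpotent endomorphism of a finite-dimensional space has kernel and cokernel of the same dimension, one obtains
\[
b_k=\dim\ker A_k+\dim\ker A_{k-1},\qquad \sum_{p+q=k}h^{p,q}=\sum_{p+q=k}\dim\ker T_{p,q}+\sum_{p+q=k-1}\dim\ker T_{p,q},
\]
where $A_k$ is the action of $\ad_{e_0}$ on $\Wedge^k\gotha^*_\IC$ and $T_{p,q}$ is the action of the generator $\bar\alpha\in\nulleins{\gothf}$ on $V^{p,q}=\Wedge^q\nulleins{\gothb^*}\tensor\Wedge^p\einsnull{\lg^*}$. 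The theorem thus reduces to the single identity
\[
\sum_{p+q=k}\dim\ker T_{p,q}=\dim\ker A_k\qquad\text{for all }k\geq 0.
\]

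To prove this identity, I fix a splitting and identify $\gotha^*_\IC\cong \IC e^1\oplus\gothb^{*(1,0)}\oplus\gothb^{*(0,1)}$, so that $A^*$ becomes a nilpotent derivation of $\Wedge^\bullet\gotha^*_\IC$ determined by its degree-one restriction. On the other side, the canonical isomorphism $\Wedge^q\nulleins{\gothb^*}\tensor\Wedge^p\einsnull{\lg^*}\cong\Wedge^{p+q}W$ for $W=\IC\alpha\oplus\gothb^{*(1,0)}\oplus\gothb^{*(0,1)}$ assembles the operators $T_{p,q}$ into a nilpotent derivation $T$ of $\Wedge^\bullet W$. Any linear isomorphism $W\to\gotha^*_\IC$ intertwining $T|_W$ and $A^*|_{\gotha^*_\IC}$ extends uniquely to an algebra isomorphism intertwining $T$ and $A^*$, and this preserves kernel dimensions in every degree. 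It therefore suffices to show that the two nilpotent endomorphisms $T|_W$ and $A^*|_{\gotha^*_\IC}$ of the $(2n+1)$-dimensional spaces share the same Jordan partition.

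The main obstacle is this Jordan-partition comparison. Using the structure equations in Corollary~\ref{cor: structure eq} and the matrix $A$ in~\eqref{eq: A}, both partitions can be read off directly. If $\epsilon=0$, both operators split as $0\oplus B\oplus B$ on their respective decompositions, so the partitions obviously coincide. If $\epsilon=1$, the derivation $T|_W$ acquires a single exceptional entry coming from $\delbar\beta^0_1=\alpha\wedge\bar\alpha$, which lengthens the first Jordan chain of $B$ from $j-1$ to $j$; for $A^*$ the two coupled entries produced by the $\epsilon$-term in~\eqref{eq: A} decompose the $(2j-1)$-dimensional first-block subspace into one Jordan block of size $j$ and one of size $j-1$. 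In either case the resulting Jordan partition coincides with the partition of $A$ given in Theorem~\ref{thm: ABDGH}, so the identity above holds and the Frölicher spectral sequence degenerates at $E_1$.
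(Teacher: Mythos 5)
Your proof is correct and follows essentially the same route as the paper: both arguments assemble the bigraded pieces $\Wedge^q\nulleins{\gothb^*}\tensor\Wedge^p\einsnull{\lg^*}$ into the exterior algebra of $W=\nulleins{\gothb^*}\oplus\einsnull{\lg^*}$ and reduce everything to the fact that $W$ and $\gotha_\IC^*$ are isomorphic modules over the respective one-dimensional Lie algebras, that is, that the two nilpotent operators share the Jordan partition of Theorem~\ref{thm: ABDGH} (which is exactly the content of Lemma~\ref{lem: module structures}). The only difference is cosmetic: you keep track of the total degree $k$ and prove $b_k=\sum_{p+q=k}h^{p,q}$ directly via the kernel identity, whereas the paper compares the total sums over all degrees and then deduces the degreewise equality from the Fr\"olicher inequality.
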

\begin{proof}
Starting from Proposition \ref{prop: Dolbeault}, let us consider the total Dolbeault cohomology, disregarding the bidegree decomposition for the moment:
\begin{align*}
 H^{\bullet,\bullet}_{\delbar} (X) & = \bigoplus_{p,q}  H^{p,q}_{\delbar}(X)\\
 & = \bigoplus_{p,q} H^q(\nulleins\lg , \Wedge^p \einsnull{\lg^*})\\
 & \isom  \bigoplus_{p,q,r} H^r(\nulleins\gothf, \Wedge^{q}\nulleins{\gothb^*}\tensor \Wedge^p \einsnull{\lg^*})\\
 & \isom   H^\bullet(\nulleins\gothf, \bigoplus_{p,q}\Wedge^{q}\nulleins{\gothb^*}\tensor \Wedge^p \einsnull{\lg^*})\\
 & \isom   H^\bullet\left(\nulleins\gothf, \Wedge^\bullet\left(\nulleins{\gothb^*}\oplus\einsnull{\lg^*}\right)\right)
\end{align*}

Repeating this for the complexified de Rham cohomology using Proposition \ref{prop: deRham} we get 
\begin{align*}
 H^\bullet(X, \IC) \isom H^\bullet \left( \lh_\IC , \Wedge^\bullet \gotha_\IC^*\right).
\end{align*}
Since both $\nulleins{\gothf} $ and $\lh_\IC$ are one-dimensional complex Lie algebras, it is sufficient to show that there is an isomorphism between them making $\left(\nulleins{\gothb^*}\oplus\einsnull{\lg^*}\right)$ and $\gotha^*_\IC$ into isomorphic modules.
This follows immediately from Lemma \ref{lem: module structures} proved below.

Therefore the total cohomologies of their exterior algebras are isomorphic and we get 
\[ \sum_{k \geq 0} b_k(X) = \sum_{p,q\geq 0} h^{p,q}(X).\]

Thus the Fr\"olicher spectral sequence $E_1^{p,q} = H^{p,q}_{\delbar} (X) \implies H^{p+q}(X, \IC)$ has to degenerate at $E_1$ as claimed. 
\end{proof}
\begin{rem}
 Note that the $E_1$-degeneration of the Fr\"olicher spectral sequence cannot be caused by the $\del\delbar$-Lemma being valid on $X$, because this would imply formality and complex nilmanifolds are never formal \cite{hasegawa1989minimal}. In particular, we should not expect that our Hodge numbers are symmetric under conjugation or that we have a canonical decomposition.
 
 This becomes explicit in Example \ref{exam: explicit computation}.
\end{rem}

\subsection{Proof of Theorem \ref{thm: deform}}

For the first item \cite[Thm. 2.6]{rollenske09b} implies that small deformations of $X$ are again complex nilmanifolds. The unobstructedness of deformations holds in general for complex manifolds with trivial canonical bundle and with Fr\"olicher spectral sequence degenerating at $E_1$, see \cite[Theorem~3.3]{ACRT18} and references therein. The triviality of the canonical bundle was observed in \cite{salamon01} and the degeneration of the spectral sequence is Theorem \ref{thm: Froelicher} above. By \cite[Theorem~4.2]{wavrik1969obstructions} the Kuranishi family of $X$ is universal if and only if the dimension of the automorphism group $\Aut(X_t)$ of the complex manifolds $X_t$ parametrized by the Kuranishi space is constant, provided the Kuranishi space is reduced. In our case it is even smooth since $X$ is unobstructed. Let $\lg$ be the Lie algebra associated to $X$. Then every complex manifold $X_t= (M,J_t)$ is again a complex nilmanifold with Lie algebra $\lg$ and as such their canonical bundle is trivial. Therefore, we have $\dim \Aut(X_t) = \dim H_{\delbar}^{n-1,0}(X_t) = \dim H^{n-1,0}(\lg, J_t)$. Indeed, the first equality comes from the fact that the Lie algebra of $\Aut(X_t)$ is the space of global holomorphic vector fields on $X_t$ combined with Serre duality and the second equality is Theorem~\ref{thm: stbs}. Since the complex structure on $\lg$ is unique (Proposition \ref{prop: classification}), the dimension of the automorphism group does not change in the family. 
 
For the last item we apply \cite[Theorem A]{rollenske09}. The last subspace $\kz^{j-1}\lg + \kc^1\lg$ of the stable principal torus bundle series from Corollary \ref{cor: stbs} is an abelian ideal in $\lg$ and complex tori are a good fibre class in the sense of \cite{rollenske09} by \cite[Remark~2.12]{rollenske09}.
This concludes the proof. \qed

\subsection{Explicit computations}
We will now show how to explicitly compute Hodge and Betti numbers. This is straightforward in principle, but will become messy when the Jordan partition \eqref{eq: jordan A}  has many summands.

As a preparation consider a $1$-dimensional complex Lie-algebra $\gothq$ and the  $\gothq$-module $W_i=\IC^i$ where the action is given by the Jordan block matrix $\mathcal J_i$. More precisely, we have the action
\[
\gothq \times W_i \to W_i, \; (\lambda, w) \mapsto \lambda \mathcal J_i w.
\] 
\begin{lem}\label{lem: cohomology}
 We have 
 \[ \dim H^0(\gothq, W_i)  = \dim H^1(\gothq, W_i) = 1.\]
\end{lem}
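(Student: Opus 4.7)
The plan is to compute the two cohomology groups directly using the Chevalley-Eilenberg complex, since $\gothq$ is one-dimensional and $W_i$ is explicitly described.

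First I would fix a generator $\lambda$ of $\gothq$ with dual $\lambda^* \in \gothq^*$. The Chevalley-Eilenberg complex computing $H^\bullet(\gothq, W_i)$ then reduces to the two-term complex
\[
0 \longrightarrow W_i \xrightarrow{\;\;d\;\;} \gothq^* \otimes W_i \longrightarrow 0,
\]
where $d(w) = \lambda^* \otimes (\lambda \cdot w) = \lambda^* \otimes \mathcal J_i w$. Identifying $\gothq^* \otimes W_i$ with $W_i$ via $\lambda^*$, the differential is precisely the linear map $\mathcal J_i \colon W_i \to W_i$. Hence
\[
H^0(\gothq, W_i) = \ker \mathcal J_i, \qquad H^1(\gothq, W_i) = \coker \mathcal J_i.
\]

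Next I would read off both spaces from the explicit shape of $\mathcal J_i$ given in \eqref{eq: J_k}. With respect to the standard basis $v_1, \dots, v_i$ of $W_i = \IC^i$, the matrix $\mathcal J_i$ sends $v_k \mapsto v_{k+1}$ for $1 \leq k < i$ and $v_i \mapsto 0$. Therefore $\ker \mathcal J_i = \langle v_i \rangle$ is one-dimensional and $\operatorname{im} \mathcal J_i = \langle v_2, \dots, v_i\rangle$ has dimension $i-1$, so $\coker \mathcal J_i$ is one-dimensional as well (generated by the class of $v_1$). This gives $\dim H^0(\gothq, W_i) = \dim H^1(\gothq, W_i) = 1$, as claimed.

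There is no real obstacle here: the only thing to watch is keeping the sign and identification conventions consistent so that the differential really is $\mathcal J_i$ rather than its negative or transpose, but this has no effect on the ranks of kernel and cokernel.
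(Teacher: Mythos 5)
Your proof is correct and follows the same route as the paper: both identify the Chevalley--Eilenberg complex with the two-term complex $\IC^i \xrightarrow{\mathcal J_i} \IC^i$ and read off the kernel and cokernel of the single Jordan block. You simply spell out the rank count that the paper leaves implicit.
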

\begin{proof}
 The Chevalley-Eilenberg complex for $W_i$ can be identified as 
 \[ \Wedge^0 \gothq^* \tensor W_i \isom \IC^i \overset{\cdot \mathcal J_i}{\longrightarrow} \IC^i \isom \gothq^*\tensor W_i,\]
 which gives the result.
\end{proof}

\begin{lem}\label{lem: module structures}
 Let $\lg =\lg_A$ be a nilpotent almost abelian Lie algebra with complex structure. Let \eqref{eq: jordan A}  
 and \eqref{eq: complex jordan B} be the associated  Jordan partitions from Theorem \ref{thm: ABDGH}. Let $j$ be the size of the overlapping block.
 Then, with $q_i$ as in \eqref{eq: complex jordan B} and $m_i$ as in \eqref{eq: jordan A},
 \begin{enumerate}
  \item As a $(\lg /\gotha)_\IC$-module we have 
  \[ \gotha_\IC^* \isom \bigoplus_{i>0} m_i W_i.\]
  \item As a $\nulleins{\gothf}$-module we have 
  \[ \nulleins{\gothb^*} \isom  \bigoplus_{i>0} q_i W_i.\]
  \item As a $\nulleins{\gothf}$-module we have
  \[ \einsnull{\lg^*}\isom
  \begin{cases}
(q_j+1)W_{j} \oplus (q_{j-1} -1) W_{j-1}\oplus  \bigoplus_{i>0, i\neq j, j-1} q_i W_i, & j>1\\
(q_1+1)W_{1} \oplus  \bigoplus_{i>1} q_i W_i, & j=1
  \end{cases}
.\]
 \end{enumerate}
\end{lem}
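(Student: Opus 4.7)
The plan is to leverage the explicit basis from Proposition~\ref{prop: classification} together with the structure equations of Corollary~\ref{cor: structure eq}, which simultaneously encode the Lie bracket and the module actions needed here. Since $W_i$ is by definition a single Jordan block of size $i$, every assertion reduces to identifying the Jordan decomposition of one specific linear operator acting on the appropriate vector space.

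Parts (1) and (2) follow directly from the matrix presentation of $\phi = \ad_{e_0}$. For (1) the action of $(\lg/\gotha)_\IC$ on $\gotha_\IC$ is induced by $\phi$, represented by the matrix $A$ in \eqref{eq: A} with Jordan partition \eqref{eq: jordan A}; passing to the dual does not change the Jordan type, so $\gotha_\IC^* \cong \bigoplus_i m_i W_i$. For (2) the $J$-invariance of $\gothb$ together with the fact that $\phi|_\gothb$ commutes with $J|_\gothb$ (already used in the proof of Proposition~\ref{prop: classification}) lets one view $\phi|_\gothb$ as a complex-linear endomorphism of $(\gothb, J|_\gothb)$, represented in a suitable complex basis by the matrix $B$ with Jordan partition \eqref{eq: complex jordan B}. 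A short computation, using that $[e_1, \gothb] = 0$ because $e_1 \in \gotha$ and $\gotha$ is abelian, shows that the lift $\overline Z = e_0 + i e_1$ of a generator of $\nulleins\gothf$ acts on $\nulleins\gothb$ again by $B$; dualising yields $\nulleins{\gothb^*} \cong \bigoplus_i q_i W_i$.

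Part (3) is the main point, and it is where the term \emph{overlapping block} acquires its meaning. I would work directly from Corollary~\ref{cor: structure eq}: extracting the $(1,1)$-parts of the differentials gives $\delbar \alpha = 0$, $\delbar \beta^\ell_i = \overline\alpha \wedge \beta^\ell_{i-1}$ for $i>1$, $\delbar \beta^\ell_1 = 0$ for $\ell \geq 1$, together with the additional term $\delbar \beta^0_1 = \alpha \wedge \overline\alpha$ when $\varepsilon=1$. Applying \eqref{eq: action} with $\overline Z$ dual to $\overline\alpha$ then yields $\overline Z \cdot \alpha = 0$, $\overline Z \cdot \beta^\ell_i = -\beta^\ell_{i-1}$ for $i>1$, $\overline Z \cdot \beta^\ell_1 = 0$ for $\ell \geq 1$, and, crucially, $\overline Z \cdot \beta^0_1 = \alpha$ when $\varepsilon = 1$. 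Reading off Jordan chains now produces the answer: each sequence $\beta^\ell_{k_\ell}, \dots, \beta^\ell_1$ with $\ell \geq 1$ is one chain of length $k_\ell$, contributing a copy of $W_{k_\ell}$; when $j=1$ the form $\alpha$ is a chain of length one on its own, and summing over $\ell$ according to the multiplicities $q_i$ gives $(q_1+1)W_1 \oplus \bigoplus_{i>1} q_i W_i$; when $j>1$ the distinguished chain $\beta^0_{k_0}, \dots, \beta^0_1, \alpha$ has length $k_0+1 = j$, which is one more than the length $k_0 = j-1$ of the first Jordan block of $B$, and this is the precise sense in which a block of size $j$ overlaps one of size $j-1$. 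The only nontrivial part of the argument is the combinatorial bookkeeping that relates the multiplicities $q_i$ of the Jordan blocks of $B$ to the multiplicities of the $W_i$ in $\einsnull{\lg^*}$ once the chain starting at $\alpha$ is included, and that ultimately produces the shifts $q_j+1$ and $q_{j-1}-1$ in the stated formula.
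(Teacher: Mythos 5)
Your proposal is correct and follows essentially the same route as the paper: read off the module actions from the explicit basis of Proposition~\ref{prop: classification} and the structure equations of Corollary~\ref{cor: structure eq}, then identify the Jordan chains, with the chain $\beta^0_{k_0},\dots,\beta^0_1,\alpha$ of length $j$ accounting for the shifts $q_j+1$ and $q_{j-1}-1$. The only (immaterial) discrepancy is a sign in the action on $\beta^0_1$, which does not affect the Jordan type.
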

\begin{proof}
Choose a basis $e_0,\dots,e_{2n+1}$ of $\lg$ as constructed in Proposition \ref{prop: classification} with dual basis $e^0,\dots,e^{2n+1}$. For every Jordan block $\mathcal
J_i$ in $A$ (\ref{eq: A}) we have a subspace $W_i$ of $\gotha_\IC^*$ spanned by the vectors $e^k$ such that the $k$-th column of $A$ corresponds the Jordan block $\mathcal J_i$. Clearly, these subspaces are invariant under the action of $A$ on $\gotha_\IC^*$. This proves the first item.

For the second and third item we have to consider the action given in (\ref{eq: action}). Choose a basis $\alpha, \beta^0_1, \dots, \beta_{j-1}^0, \beta_1^1, \dots, \beta_{k_1}^1, \dots,\beta^r_{k_r}$ of the space $\einsnull{\lg^*}$ with structure equations as in Corollary \ref{cor: structure eq}. Then $\nulleins{\gothf}$ is generated by the class of $\overline{X}$ dual to $\overline\alpha$ and  $\nulleins{\gothb^*} = \langle \overline{\beta^0_1}, \dots, \overline{\beta_{j-1}^0}, \overline{\beta_1^1},\dots,\overline{\beta_{k_1}^1},\dots,\overline{\beta^r_{k_r}}\rangle$. Starting  with the second item, the action of $\nulleins{\gothf}$ on $\nulleins{\gothb^*}$ is now given by
\[(\overline{X}, \overline{\beta_i^\ell}) \mapsto
\begin{cases}
    0 & \text{for $i=1$},\\
   - \overline{\beta_{i-1}^\ell} & \text{for $i>1$},
\end{cases}
\]
since $\overline{X} \intprod \delbar \overline{\beta_i^\ell} = \overline{X} \intprod( \overline{\alpha} \wedge \overline{\beta_{i-1}^\ell}) =\overline{\beta_{i-1}^\ell}$. Therefore, the subspaces $W_{k_i} =\langle\overline{\beta_1^i},\dots,\overline{\beta_{k_i}^i}\rangle$ of $\nulleins{\gothb^*}$ are invariant under the action of $\nulleins{\gothf}$. By the same computation the action of $\nulleins{\gothf}$ on $\einsnull{\lg^*}$ is given by $(\overline X, \alpha) \mapsto 0$ and
\[(\overline{X}, {\beta_i^\ell}) \mapsto
\begin{cases}
    0 & \text{for $\ell>0$ and $i=1$},\\
    {-\beta_{i-1}^\ell} & \text{for $i>1$},\\
    -\alpha & \text{for $\ell =0$ and $i=1$},
\end{cases}
\]
which gives the decomposition in the last item.
\end{proof}

In principle, the preceding lemmas allow for the computation of Hodge and Betti numbers in every case: for a nilpotent representation $V$ of a $1$-dimensional complex Lie algebra $\IC f$,  $V$ becomes an $\liesl_2(\IC)$-representation (see section \ref{sect: Jordan}) and decomposes as $V= \oplus _{i>0}\, m_i W_i$, where $W_i$ is the irreducible $\liesl_2(\IC)$-representation of dimension $i$. 
Denote by $\delta ( V )$ the number of $\liesl_2(\IC)$-irreducible summands  in a decomposition of $V$, that is, the number of Jordan blocks of a representing matrix $A$ of $f$. In other words:
\begin{equation}\label{eq:delta}
    \delta (V)= \sum _{i>0} m_i, \qquad \text{where } m_i=\dim \operatorname{Hom}_{\liesl_2(\IC)} (W_i, V).
\end{equation}
 
 \begin{prop}\label{prop: counting}
 Consider two irreducible $\liesl_2(\IC)$-representations $W_i$ and $W_k$.
 \begin{enumerate}
  \item For the tensor product we have $\delta (W_i \tensor W_k)  = \min\{i, k\}$.
 \item For the exterior powers we get the following:
 \begin{enumerate}
 \item We have $\delta (\Wedge ^r W_i) = \delta (\Wedge ^{i-r} W_i)$, so it is enough to know these values for $0\leq r \leq \left\lfloor\frac i2 \right\rfloor$.
 \item Following the notation of \cite{Almkvist} we define $A(m,n,r)$ as the number of partitions of $m$ with at most $n$ summands all of size $\leq r$. Then 
     \[
        \delta  (\Wedge^r W_i)  = A\left(\left\lfloor \frac{r(i-r)}{2} \right\rfloor, i-r,r\right).
     \] 
In particular, we have  $\delta ( \Wedge^0 W_i ) = \delta(\Wedge^1 W_i) = 1$ and $\delta(\Wedge^2 W_i) = \left\lfloor \frac{i}{2} \right\rfloor$.
 \end{enumerate}
  \end{enumerate}
\end{prop}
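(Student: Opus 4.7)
\medskip

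\noindent\emph{Proof proposal.} The plan is to use character theory for $\liesl_2(\IC)$-representations throughout, relying on three classical ingredients: the Clebsch-Gordan decomposition, self-duality of irreducibles, and the $q$-binomial identity $e_r(1,t,\dots,t^{i-1}) = t^{r(r-1)/2}\binom{i}{r}_t$.

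For part (1), I would simply invoke the Clebsch-Gordan formula
\[
W_i \otimes W_k \;\cong\; \bigoplus_{s=0}^{\min\{i,k\}-1} W_{\,i+k-1-2s},
\]
from which the count $\delta(W_i \otimes W_k) = \min\{i,k\}$ is immediate.

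For part (2)(a), I would exhibit the isomorphism $\Wedge^r W_i \cong \Wedge^{i-r} W_i$ from the natural perfect pairing $\Wedge^r W_i \otimes \Wedge^{i-r} W_i \to \Wedge^i W_i$, observing that the top exterior power $\Wedge^i W_i$ is one-dimensional and trivial because every $X\in\liesl_2(\IC)$ acts with trace zero on $W_i$; combined with the self-duality of irreducible $\liesl_2(\IC)$-representations, this yields $\Wedge^{i-r} W_i \cong (\Wedge^r W_i)^* \cong \Wedge^r W_i$.

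For part (2)(b), which is the main work, I would first compute the character of $\Wedge^r W_i$ in the variable $q$ tracking the $H$-weights. Since $W_i$ has weights $i-1, i-3, \dots, -(i-1)$, the character of $\Wedge^r W_i$ is $e_r(q^{i-1}, q^{i-3}, \dots, q^{-(i-1)})$; factoring out $q^{-(i-1)}$ from each slot and applying the standard identity $e_r(1,t,\dots,t^{i-1}) = t^{r(r-1)/2}\binom{i}{r}_t$ with $t=q^2$, one obtains
\[
\chi_{\Wedge^r W_i}(q) \;=\; q^{-r(i-r)}\,\binom{i}{r}_{q^2}.
\]
Then I would use the classical fact that $\binom{i}{r}_q = \sum_m A(m,i-r,r)\, q^m$ is the generating function for partitions in the $(i-r)\times r$ rectangle. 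Finally, using the general principle that for any finite-dimensional $\liesl_2(\IC)$-representation $V$ the number of irreducible summands equals $\dim V_0 + \dim V_1$ (the combined dimensions of the weight-$0$ and weight-$1$ spaces), I would read off $\delta(\Wedge^r W_i)$ as the coefficient of $q^0$ or $q^1$ (whichever matches the parity of $r(i-r)$) in $\chi_{\Wedge^r W_i}(q)$. This singles out exactly one value of $m$, giving $A(\lceil r(i-r)/2\rceil, i-r, r)$, and the stated $A(\lfloor r(i-r)/2\rfloor, i-r, r)$ follows from the involution $A(m,n,r) = A(nr-m,n,r)$ obtained by complementing a partition inside its bounding rectangle. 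The explicit values $\delta(\Wedge^0 W_i) = \delta(\Wedge^1 W_i) = 1$ and $\delta(\Wedge^2 W_i) = \lfloor i/2\rfloor$ then come from counting partitions of $i-2$ into parts of size at most $2$.

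The only real obstacle is matching the weight-multiplicity count with the combinatorial object $A(m,n,r)$; once the $q$-binomial identity and the principle $\delta(V) = \dim V_0 + \dim V_1$ are in place, everything collapses into a single line of bookkeeping, so no delicate estimate is required.
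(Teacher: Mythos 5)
Your parts (1) and (2)(a) coincide with the paper's proof: Clebsch--Gordan gives the tensor product count directly, and the pairing $\Wedge^r W_i \tensor \Wedge^{i-r}W_i \to \Wedge^i W_i$ into the trivial top exterior power, combined with self-duality of irreducibles, is exactly the paper's isomorphism $\Wedge^{i-r}W_i \isom (\Wedge^r W_i)^*\tensor \det W_i \isom \Wedge^r W_i$. For the substantial part (2)(b) you take a genuinely different and fully self-contained route. The paper quotes Almkvist's explicit irreducible decomposition of $\Wedge^r W_i$, in which the multiplicity of each $W_n$ is a difference of two values of $A$, and obtains $\delta(\Wedge^r W_i)$ by telescoping that sum. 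You instead compute the character $\chi_{\Wedge^r W_i}(q)=q^{-r(i-r)}\binom{i}{r}_{q^2}$ from the $q$-analogue of the elementary symmetric polynomial identity, recognise the Gaussian binomial as the generating function $\sum_m A(m,i-r,r)\,q^m$ for partitions in the $(i-r)\times r$ box, and read off $\delta$ from the middle weight space via the standard fact $\delta(V)=\dim V_0+\dim V_1$. Your bookkeeping is correct: every weight of $\Wedge^r W_i$ has the parity of $r(i-r)$, so exactly one of the weight-$0$ and weight-$1$ spaces is nonzero, its dimension is $A\bigl(\lceil r(i-r)/2\rceil, i-r,r\bigr)$, and the complementation involution $A(m,n,r)=A(nr-m,n,r)$ turns the ceiling into the floor appearing in the statement (when $r(i-r)$ is even the two already agree). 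What your approach buys is independence from Almkvist's Proposition~1.21 --- only classical facts about $q$-binomials and weight multiplicities are needed --- and it makes transparent why a single partition count survives; what the paper's citation buys is brevity and, as a by-product, the complete list of irreducible constituents of $\Wedge^r W_i$ rather than just their number (which the paper in fact reuses, e.g.\ in the computation of $h^{2,1}$ in Example~\ref{exam: 1}).
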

\begin{proof}
 For the first item  this is the Clebsch-Gordan rule \cite[Exercise~22.7]{Humphreys} for the tensor product of $\liesl_2(\IC)$-representations, which gives for $i\leq k$
 \[\delta(W_i \tensor W_k) = \delta\left( \bigoplus_{n=0}^{i-1} W_{k+i-1-2n}\right) = i.\]

 For part $(a)$ of the second item note that $\Wedge ^{i-k} W_i \isom \left(\Wedge ^{k} W_i\right)^* \tensor \det W_i \isom\Wedge^k W_i$ as $\liesl_2 (\IC )$-representations. Finally, by \cite[Proposition~1.21]{Almkvist} we have 
\begin{align*}
    \delta  (\Wedge^r W_i)  &= \delta \left( \bigoplus^{r(i-r)}_{n=1}\left( A\left(\tfrac{r(i-r)-n+1}{2},i-r,r\right) -A\left(\tfrac{r(i-r)-n-1}{2},i-r,r\right) \right)W_n  \right)\\
    &= \sum^{r(i-r)}_{n=1}  A\left(\tfrac{r(i-r)-n+1}{2},i-r,r\right) -A\left(\tfrac{r(i-r)-n-1}{2},i-r,r\right)\\
    &= A\left(\left\lfloor \frac{r(i-r)}{2} \right\rfloor, i-r,r\right)
\end{align*}
This proves part $(b)$.
\end{proof}

\begin{prop}\label{prop: betti-hodge}
 Let $X$ be an almost abelian complex nilmanifold. In the notation introduced above, the cohomological invariants of $X$ are
 \begin{align*}
 b_k(X)& = \delta(\Wedge^k \gotha^*) + \delta(\Wedge^{k-1} \gotha^*)\\
 h^{p,q}(X) &  = \delta(\Wedge^q\nulleins{\gothb^*}\tensor \Wedge^p \einsnull{\lg^*}) + \delta( \Wedge^{q-1}\nulleins{\gothb^*}\tensor \Wedge^p \einsnull{\lg^*}). 
\end{align*}
\end{prop}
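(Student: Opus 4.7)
The plan is to reduce both formulas, via Propositions \ref{prop: deRham} and \ref{prop: Dolbeault}, to computing the dimensions of $H^0$ and $H^1$ of a one-dimensional complex Lie algebra acting nilpotently on some finite-dimensional module, and then to deduce that each such dimension equals $\delta$ of the module by decomposing it into Jordan blocks and applying Lemma \ref{lem: cohomology} block by block.

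More concretely, for the Betti numbers I would first replace the real coefficients in Proposition \ref{prop: deRham} by complex ones, which is harmless since $b_k(X) = \dim H^k(M,\IR) = \dim H^k(M,\IC)$. The action of $\lh$ on $\gotha$ is $\ad_{e_0}$, represented by the nilpotent matrix $A$, so the induced action on $\Wedge^k \gotha_\IC^*$ is nilpotent. By the Jacobson-Morozov argument recalled in Section \ref{sect: Jordan}, this module decomposes as a sum of Jordan summands $W_i$ for the complexified $\lh$, and the total number of summands is by definition $\delta(\Wedge^k \gotha^*)$. Additivity of Lie algebra cohomology over direct sums, combined with Lemma \ref{lem: cohomology}, then yields
\[
\dim H^0(\lh_\IC, \Wedge^k \gotha_\IC^*) = \dim H^1(\lh_\IC, \Wedge^k \gotha_\IC^*) = \delta(\Wedge^k \gotha^*),
\]
and plugging this into Proposition \ref{prop: deRham} gives the first formula.

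For the Hodge numbers the strategy is the same, now applied to Proposition \ref{prop: Dolbeault}: I need the dimensions of $H^0$ and $H^1$ of $\nulleins{\gothf}$ acting on the module $V^{p,q} = \Wedge^q\nulleins{\gothb^*}\tensor \Wedge^p \einsnull{\lg^*}$. Lemma \ref{lem: module structures} describes the action of $\nulleins{\gothf}$ on both $\nulleins{\gothb^*}$ and $\einsnull{\lg^*}$ as a direct sum of Jordan block modules $W_i$, so in particular both actions are nilpotent; consequently the induced action on $V^{p,q}$ is nilpotent. Again by Jacobson-Morozov and additivity, $V^{p,q}$ decomposes as a direct sum of $\delta(V^{p,q})$ irreducible summands, and Lemma \ref{lem: cohomology} gives
\[
\dim H^0(\nulleins{\gothf}, V^{p,q}) = \dim H^1(\nulleins{\gothf}, V^{p,q}) = \delta(V^{p,q}).
\]
Substituting into Proposition \ref{prop: Dolbeault} yields the Hodge number formula.

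There is really no hard step here: both formulas amount to an additivity argument over Jordan blocks, and the only thing to verify is that the actions appearing are nilpotent so that $\delta$ is well-defined. The main work, namely Lemma \ref{lem: cohomology} for a single Jordan block, has already been done, and the tensor and exterior algebra calculations needed to extract actual numbers from $\delta$ are postponed to Proposition \ref{prop: counting}.
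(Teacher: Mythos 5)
Your proposal is correct and follows exactly the route the paper takes: its proof of Proposition \ref{prop: betti-hodge} is simply the observation that the statement follows from Propositions \ref{prop: deRham} and \ref{prop: Dolbeault} together with Lemmas \ref{lem: cohomology} and \ref{lem: module structures}, which is precisely the reduction you carry out. The extra details you supply (complexifying the coefficients for the Betti numbers, nilpotency of the induced actions, additivity of cohomology over the Jordan-block decomposition) are the implicit content of the paper's ``follows immediately.''
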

\begin{proof}
This follows immediately from Proposition \ref{prop: deRham}, Proposition \ref{prop: Dolbeault}, Lemma \ref{lem: cohomology}, and Lemma \ref{lem: module structures}.
\end{proof}
\begin{cor}\label{cor: hodge symmetric}
    Let $X$ be an almost abelian complex nilmanifold with $\epsilon$ as in \eqref{eq: epsilon}.
    \begin{enumerate}
        \item If $\epsilon=0$, then the Hodge numbers of $X$ satisfy $h^{p,q}(X) = h^{q,p}(X)$ and thus the odd Betti numbers of $X$ are even.
        \item  If $\epsilon =1$, then $b_1(X)$ is odd and Hodge symmetry does not hold.
    \end{enumerate}
\end{cor}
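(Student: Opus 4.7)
The plan is to exploit the explicit module decompositions from Lemma~\ref{lem: module structures} and read off both parity statements directly from the formulas in Proposition~\ref{prop: betti-hodge}.

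For part $(1)$, the key observation is that when $\epsilon = 0$, the two decompositions in Lemma~\ref{lem: module structures} match: both $\einsnull{\lg^*}$ and $\nulleins{\gothb^*}$ are built from $\bigoplus_{i>0} q_i W_i$, with the former carrying exactly one extra trivial summand $W_1 = \langle \alpha \rangle$. Hence
\[
\einsnull{\lg^*} \isom W_1 \oplus \nulleins{\gothb^*} \quad \text{as } \nulleins{\gothf}\text{-modules},
\]
where the identification as modules (not merely as vector spaces) is read off from the structure equations of Corollary~\ref{cor: structure eq}: the action of $\overline X$ on the $\beta_i^\ell$ is identical to its action on the conjugates $\overline{\beta_i^\ell}$ spanning $\nulleins{\gothb^*}$. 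Expanding $\Wedge^p \einsnull{\lg^*} \isom \Wedge^p \nulleins{\gothb^*} \oplus \Wedge^{p-1}\nulleins{\gothb^*}$ (using $\Wedge^k W_1 = 0$ for $k \geq 2$) and substituting into Proposition~\ref{prop: betti-hodge}, the formula for $h^{p,q}(X)$ turns into a double sum
\[
h^{p,q}(X) = \sum_{(a,b) \in \{p-1,p\}\times\{q-1,q\}} \delta\bigl(\Wedge^b \nulleins{\gothb^*} \tensor \Wedge^a \nulleins{\gothb^*}\bigr).
\]
Using $\delta(V \tensor W) = \delta(W \tensor V)$ (immediate from Proposition~\ref{prop: counting}), this expression is invariant under the swap $p \leftrightarrow q$, yielding $h^{p,q}(X) = h^{q,p}(X)$. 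Then from the Fr\"olicher degeneration in Theorem~\ref{thm: Froelicher}, $b_{2k+1}(X) = \sum_{p+q = 2k+1} h^{p,q}(X)$; since the diagonal $p = q$ never contributes for odd total degree, the off-diagonal summands pair up as $h^{p,q} + h^{q,p} = 2h^{p,q}$, forcing $b_{2k+1}(X)$ to be even.

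For part $(2)$, I would compute $b_1$ directly. By Proposition~\ref{prop: betti-hodge} and Lemma~\ref{lem: module structures}$(i)$,
\[
b_1(X) = \delta(\gotha^*_\IC) + 1 = \sum_{i>0} m_i + 1,
\]
and for $\epsilon = 1$ the Jordan partition~\eqref{eq: jordan A} yields
\[
\sum_{i>0} m_i = (2q_j+1) + (2q_{j-1}-1) + \sum_{i \neq j, j-1} 2q_i = 2\sum_{i>0} q_i,
\]
so $b_1(X)$ is odd. If Hodge symmetry held, we would have $b_1(X) = h^{1,0}(X) + h^{0,1}(X) = 2\, h^{1,0}(X)$, an even number, contradicting what we just proved. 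The only delicate point is verifying the module (rather than vector space) isomorphism $\einsnull{\lg^*} \isom W_1 \oplus \nulleins{\gothb^*}$ in part $(1)$; everything else is routine bookkeeping with Jordan partitions.
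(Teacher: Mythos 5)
Your proposal is correct and follows essentially the same route as the paper: for $\epsilon=0$ you use Lemma \ref{lem: module structures} to identify $\einsnull{\lg^*}\isom W_1\oplus\nulleins{\gothb^*}$ as $\nulleins{\gothf}$-modules, expand the exterior power, and read off the $p\leftrightarrow q$ symmetry from Proposition \ref{prop: betti-hodge}, while for $\epsilon=1$ you show $b_1(X)=2\sum_i q_i+1$ is odd and conclude via Fr\"olicher degeneration. The only cosmetic differences are that you write the symmetric expression as an explicit double sum and compute $\delta(\gotha^*)$ directly from the Jordan partition \eqref{eq: jordan A} rather than as $2\delta(\nulleins{\gothb^*})$; both steps are equivalent to the paper's.
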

\begin{proof}
For the first item it follows from Lemma \ref{lem: module structures} that in the case $\varepsilon=0$ or equivalently $j=1$ we have
\[ \nulleins{\gothb^*} \isom  \bigoplus_{i>0} q_i W_i, \qquad  \einsnull{\lg^*}\isom (q_1+1)W_{1} \oplus  \bigoplus_{i>1} q_i W_i\isom W_1 \oplus \nulleins{\gothb^*}.\]
Therefore
\[ \Wedge^p \einsnull{\lg^*} \isom \Wedge^p \nulleins{\gothb^*} \oplus \Wedge^{p-1} \nulleins{\gothb^*}. \]
According to Proposition \ref{prop: betti-hodge} we have
\begin{align*} 
h^{p,q}(X) & =\delta\left(\Wedge^q \nulleins{\gothb^*} \tensor \left(\Wedge^p \nulleins{\gothb^*} \oplus \Wedge^{p-1} \nulleins{\gothb^*}\right)\right) \\ 
& \quad +\delta\left(\Wedge^{q-1} \nulleins{\gothb^*} \tensor \left(\Wedge^p \nulleins{\gothb^*} \oplus \Wedge^{p-1} \nulleins{\gothb^*}\right)\right),
\end{align*}
and this expression is symmetric with respect to $p$ and $q$.\\
If $\epsilon =1$, then Proposition \ref{prop: betti-hodge} implies
\[
b_1(X) =  \delta(\gotha^*) +\delta \left( \Wedge^0\gotha^*\right) = 2 \delta \left(\einsnull{\gothb^*}\right) +1.
\]
This proves the second item.
\end{proof}
\begin{exam}\label{exam: explicit computation}
We compute some Hodge and Betti numbers in the case where $\dim \lg = 2n+2 \geq 6$,  the Jordan partition of $B$ is $n = 1 \cdot n$ and $\epsilon =1$, so we have $\gotha^* \isom W_{n+1} \oplus W_n$. 
The first Betti numbers are: 
 \begin{align*}
 b_1(X) &  = \delta(\gotha^*) + \delta (\IC) = 2+1 = 3,\\
 b_2(X) &= \delta(\Wedge ^2\gotha^*) + \delta(\gotha^*),\\
 & = \left(\delta(\Wedge ^2W_n) + \delta (W_n\tensor W_{n+1})  + \delta (\Wedge^2 W_{n+1})\right)   + \delta(\gotha^*) \\
 & =  \left\lfloor \frac{n}{2} \right\rfloor + n + \left\lfloor \frac{n+1}{2} \right\rfloor  +2 = 2n +2.
 \end{align*}
The first Hodge numbers can be computed using Lemma \ref{lem: module structures}: 
\begin{align*}
 h^{1,0} (X) & = \delta (\einsnull{\lg^*}) =1,\\
 h^{0,1} (X) &= \delta (\nulleins{\gothb ^*})+ \delta(\IC) =2,\\
 h^{2,0} (X) &= \delta (\Wedge ^2 \einsnull{\lg^*})  = \delta (\Wedge ^2 W_{n+1}) = \left\lfloor \frac{n+1}{2} \right\rfloor,\\
 h^{0,2} (X) &= \delta (\Wedge ^2 \nulleins{\gothb^*}) + \delta (\nulleins{\gothb^*})  = \delta (\Wedge ^2 W_{n}) +1 = \left\lfloor \frac{n}{2} \right\rfloor +1,\\
 h^{1,1} (X) &= \delta (\nulleins{\gothb^*}\tensor \einsnull{\lg^*}) + \delta (\nulleins{\lg^*})  = n +1
\end{align*}
and we see that $b_2(X) =  h^{0,2} (X)+ h^{1,1} (X) + h^{2,0} (X) $ as predicted by Theorem \ref{thm: Froelicher}.

Most other Betti and Hodge numbers will involve a count of partitions as in Proposition \ref{prop: counting} $(ii)\, (b)$, so we stop the analysis here. 

\end{exam}

\subsection{The 2-step nilpotent case} \label{subsec: 2-step}
In this subsection we will compute some Betti and Hodge numbers of certain almost abelian complex nilmanifolds where $\gothg$ is 2-step nilpotent. First, we state some results that we will need for the computations.  

\begin{lem}
The $\liesl_2(\IC)$-modules $\Wedge^k nW_2$, for $k=1,\ldots, 5$, $n\in \IN$, decompose as a sum of irreducible submodules as follows\footnote{As usual, $\tbinom{p}{q}=0$ if $p<q$.}:
\begin{itemize}
    \item $\Wedge^1 nW_2=nW_2$,
    \smallskip
    \item $\Wedge^2 nW_2 =\binom{n+1}{2}W_1\oplus \binom{n}{2} W_3$,
    \smallskip
    \item $\Wedge^3 nW_2 =2\binom{n+1}{3}W_2\oplus \binom{n}{3} W_4$,
    \smallskip
    \item $\Wedge^4 nW_2 =\frac{n}{2}\binom{n+1}{3}W_1\oplus 3\binom{n+1}{4} W_3\oplus \binom{n}{4}W_5$,
    \smallskip
    \item $\Wedge^5 nW_2 =n\binom{n+1}{4}W_2\oplus 4\binom{n+1}{5} W_4\oplus \binom{n}{5}W_6$.
\end{itemize}
\end{lem}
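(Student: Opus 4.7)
The plan is to recover each decomposition from the weight multiplicities of $\Wedge^k(nW_2)$. Since $W_2$ is two-dimensional with weights $+1$ and $-1$, the module $nW_2$ contains $n$ vectors of each weight, so the generating function for the characters of the exterior algebra, with $q$ tracking the $\liesl_2(\IC)$-weight and $t$ the exterior degree, factorises as
\[
\sum_{k\geq 0}\chi_{\Wedge^k(nW_2)}(q)\,t^k \;=\; (1+tq)^n(1+tq^{-1})^n \;=\; \sum_{i,j=0}^{n}\binom{n}{i}\binom{n}{j}\,t^{i+j}\,q^{i-j}.
\]
Extracting the coefficient of $t^k q^r$, I would read off the dimension of the weight-$r$ subspace of $\Wedge^k(nW_2)$ as
\[
d_{k,r} \;:=\; \binom{n}{(k+r)/2}\binom{n}{(k-r)/2}
\]
when $k+r$ is even, and zero otherwise (with the convention $\binom{n}{s}=0$ for $s<0$ or $s>n$).

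Next I would apply the elementary fact that in any finite-dimensional $\liesl_2(\IC)$-module $V$ the multiplicity of the irreducible $W_{m+1}$ equals $\dim V_m - \dim V_{m+2}$. Applied to $V=\Wedge^k(nW_2)$, this yields the multiplicity of $W_{m+1}$ as $d_{k,m}-d_{k,m+2}$, which is non-zero precisely when $m\equiv k\pmod 2$ and $0\leq m\leq k$. For $k=1,\dots,5$, the admissible values of $m$ reproduce exactly the list of irreducibles appearing on the right-hand sides of the five identities in the statement, so no extra summands can appear.

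What remains is to evaluate $d_{k,m}-d_{k,m+2}$ for each admissible pair $(k,m)$ and to match the outcome with the binomial coefficient appearing in the lemma. Each such evaluation is an elementary manipulation of binomial coefficients: for instance, for $k=2,\,m=0$ one computes $d_{2,0}-d_{2,2}=n^{2}-\binom{n}{2}=\binom{n+1}{2}$, the coefficient of $W_1$, and $d_{2,2}-d_{2,4}=\binom{n}{2}$ gives the coefficient of $W_3$. The main obstacle is purely bookkeeping: for $k=4,5$ the two-term differences involve products of up to four binomial factors, which must be simplified by pulling out a common falling factorial and collecting terms. None of this is conceptually difficult, but the arithmetic requires some care to cast the results in the compact forms $\tfrac{n}{2}\binom{n+1}{3}$, $3\binom{n+1}{4}$, $n\binom{n+1}{4}$, $4\binom{n+1}{5}$ appearing in the statement.
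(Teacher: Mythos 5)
Your proposal is correct, and I checked that the weight-space differences it prescribes do reproduce all the stated multiplicities (e.g.\ for $k=4$, $m=0$ one gets $\binom{n}{2}^2-n\binom{n}{3}=\tfrac{n}{2}\binom{n+1}{3}$, and similarly for the remaining cases). However, your route is genuinely different from the paper's. The paper argues by recursion in $n$: it peels off one copy of $W_2$ via $\Wedge^{k} nW_2= \Wedge^{k} (n-1)W_2 \oplus \bigl(\Wedge^{k-1} (n-1)W_2\bigr) \otimes W_2 \oplus \Wedge^{k-2} (n-1)W_2$, applies Clebsch--Gordan to the middle term, and solves the resulting recursions for the multiplicities $m^i_n$ with the appropriate initial values. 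You instead compute the full character generating function $(1+tq)^n(1+tq^{-1})^n$ of the exterior algebra and extract the multiplicity of $W_{m+1}$ as the difference $\dim V_m-\dim V_{m+2}$ of weight-space dimensions, giving the closed formula $\binom{n}{(k+m)/2}\binom{n}{(k-m)/2}-\binom{n}{(k+m)/2+1}\binom{n}{(k-m)/2-1}$. Your method has the advantage of working uniformly in $k$ (no separate treatment of $k=1,\dots,5$, no induction on $n$, and an a priori bound on which irreducibles can occur), at the cost of the final binomial simplifications; the paper's recursion stays closer to the Clebsch--Gordan bookkeeping already used in its Proposition on $\delta(W_i\tensor W_k)$ and produces the answer degree by degree. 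One small imprecision: the difference $d_{k,m}-d_{k,m+2}$ need not be non-zero for every $m\equiv k\pmod 2$ with $0\leq m\leq k$ (it vanishes for small $n$), but since you only need that no \emph{other} irreducibles occur, this does not affect the argument.
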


\begin{proof}
The proof follows recursively from the relation
\[ \Wedge^{k} nW_2= \Wedge^{k} (n-1)W_2 \oplus (\Wedge^{k-1} (n-1)W_2) \otimes W_2  \oplus \Wedge^{k-2} (n-1)W_2, \]
together with some lengthy combinatorial computations. 

For example, let us compute the case $k=2$. The formula above in this case becomes
\[ \Wedge^{2} nW_2= \Wedge^{2} (n-1)W_2 \oplus (n-1)W_2 \otimes W_2  \oplus W_1. \]
The well-known Clebsch-Gordan formula implies that $W_2\tensor W_2=W_1\oplus W_3$, so that 
\begin{equation}\label{eq: wedge-n}
   \Wedge^{2} nW_2 = \Wedge^{2} (n-1)W_2 \oplus n W_1 \otimes (n-1) W_3. 
\end{equation}
From this equation, together with $\Wedge^{2} W_2=W_1$, it follows that the multiplicity of $W_i$ is $0$ for $i\neq 1$ and $i\neq 3$. Moreover, if $m^i_n$ denotes the multiplicity of $W_i$ in $\Wedge^{2} nW_2$ ($i=1$ or $i=3$) then \eqref{eq: wedge-n} gives
\[ m^1_n=m^1_{n-1}+n, \qquad m^3_n=m^3_{n-1}+(n-1).\]
We have the initial values $m^1_1=1$, $m^3_1=0$. It is easily verified that the solutions to these recursive equations are 
\[ m^1_n=\tbinom{n+1}{2}, \qquad m^3_n=\tbinom{n}{2},\]
and the proof for $k=2$ is complete.
\end{proof}

Using the fact that $\delta$ is distributive with respect to the direct sum of representations, we obtain the following values:

\begin{cor}\label{cor: delta}
For $j=1,\ldots,5$ we have that
\begin{itemize}
    \item $\delta(\Wedge^1 nW_2)=n$,
    \smallskip
    \item $\delta(\Wedge^2 nW_2)=n^2$,
    \smallskip 
    \item $\delta(\Wedge^3 nW_2)=n\binom{n}{2}$,
    \smallskip
    \item $\delta(\Wedge^4 nW_2)=\binom{n}{2}^2$,
    \smallskip
    \item $\delta(\Wedge^5 nW_2)=\binom{n}{2}\binom{n}{3}$.
\end{itemize}
\end{cor}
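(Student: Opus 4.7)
The plan is to exploit the very definition of $\delta$: by formula \eqref{eq:delta}, if $V = \bigoplus_{i>0} m_i W_i$ is the decomposition into $\liesl_2(\IC)$-irreducibles, then $\delta(V) = \sum_i m_i$. In particular $\delta$ is additive on direct sums. Therefore, once we have in hand the explicit decompositions for $\Wedge^k nW_2$ provided by the preceding lemma, the proof of Corollary~\ref{cor: delta} reduces to reading off the multiplicities and simplifying a binomial coefficient identity in each of the five cases.

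Concretely, I would proceed case by case. For $k=1$ the claim $\delta(nW_2)=n$ is immediate. For $k=2$ I would compute
\[
\delta(\Wedge^2 nW_2) = \binom{n+1}{2}+\binom{n}{2} = \tfrac{n(n+1)+n(n-1)}{2} = n^2.
\]
For $k=3$ the identity to check is
\[
2\binom{n+1}{3}+\binom{n}{3} = \tfrac{n(n-1)}{6}\bigl(2(n+1)+(n-2)\bigr) = \tfrac{n^2(n-1)}{2} = n\binom{n}{2}.
\]
For $k=4$ and $k=5$ the analogous sums
\[
\tfrac{n}{2}\binom{n+1}{3}+3\binom{n+1}{4}+\binom{n}{4} \quad\text{and}\quad n\binom{n+1}{4}+4\binom{n+1}{5}+\binom{n}{5}
\]
are handled by factoring out $\tfrac{n(n-1)}{\text{const}}$ and collecting the linear/quadratic polynomial in $n$ that remains, which yields $\binom{n}{2}^2$ and $\binom{n}{2}\binom{n}{3}$ respectively. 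Each verification is an elementary (if slightly tedious) manipulation of factorials, so there is no conceptual difficulty.

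I do not anticipate any real obstacle: the content of Corollary~\ref{cor: delta} is entirely a corollary in the strict sense, in that all the hard combinatorial work went into establishing the decompositions of $\Wedge^k nW_2$ in the preceding lemma. The only mild bookkeeping point is to remember the convention $\binom{p}{q}=0$ for $p<q$, which handles the low-$n$ boundary cases (for instance $\delta(\Wedge^3 W_2)=0$ corresponds to the empty decomposition when $n=1$). Once all five identities are verified, the statement follows.
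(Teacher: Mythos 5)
Your proposal is correct and is essentially the paper's own argument: the paper likewise deduces the corollary from the preceding lemma by using that $\delta$ is additive on direct sums and summing the multiplicities (it merely omits the explicit binomial verifications, which you carry out correctly). No further comment is needed.
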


We are now ready to begin our computations. In the following, we calculate some Betti and Hodge numbers for two classes of $2$-step nilpotent almost abelian Lie algebras.

\begin{exam}\label{exam: 1}
 Assume first that the Jordan partition of $B$ is $n= m\cdot 2$ (that is, $q_2=m$ for some $m\geq 1$) and $\epsilon =0$, so that $\gotha^* \cong W_1 \oplus n W_2$. Hence $\Wedge^k \gotha^*= \Wedge^k nW_2 \oplus \Wedge^{k-1} nW_2$. Using Proposition \ref{prop: betti-hodge} and Corollary \ref{cor: delta} we compute the Betti numbers $b_k(X)$, $k\leq 5$:
\begin{align*}
    b_1(X)&=\delta(\gotha^*)+\delta (\IC) =n+2,  \\ b_2(X)&=\delta(\Wedge^2\gotha^*)+\delta(\gotha^*)=(n^2+n)+(n+1)  =(n+1)^2,   \nonumber \\    b_3(X)&=\delta(\Wedge^3\gotha^*)+\delta(\Wedge^2\gotha^*)=\left(n\binom{n}{2}+n^2\right)+(n^2+n)=3 \binom{n+2}{3}, \nonumber  \\
    b_4(X)&= \delta(\Wedge^4\gotha^*)+\delta(\Wedge^3\gotha^*)=\left(\binom{n}{2}^2+n\binom{n}{2}\right)+\left(n\binom{n}{2}+n^2\right)=\binom{n+1}{2}^2 , \nonumber  \\
    b_5(X)&= \delta(\Wedge^5\gotha^*)+\delta(\Wedge^4\gotha^*)=\left( \binom{n}{2}\binom{n}{3}+\binom{n}{2}^2\right)+\left(\binom{n}{2}^2+n\binom{n}{2}\right)=\binom{n}{2}\binom{n+2}{3}. \nonumber
\end{align*}

For the computation of the first Hodge numbers, we note that Lemma \ref{lem: module structures} gives isomorphisms $\einsnull{\lg^*}\cong W_1\oplus mW_2$ and $\nulleins{\gothb^*}\cong mW_2$, while Corollary \ref{cor: hodge symmetric} gives $h^{p,q}(X)=h^{q,p}(X)$. It follows from Proposition \ref{prop: betti-hodge} (see also the proof of Corollary \ref{cor: hodge symmetric}) that
\begin{align*} 
h^{p,q}(X) & =\delta(\Wedge^q mW_2\tensor (\Wedge^p mW_2\oplus \Wedge^{p-1} mW_2)) \\ & \qquad +\delta(\Wedge^{q-1} mW_2\tensor (\Wedge^p mW_2\oplus \Wedge^{p-1} mW_2)).
\end{align*}
For small values of $p,q$ we have 
\begin{align*}
    h^{1,0}(X)&=\delta(W_1\oplus mW_2)= m+1,  \\ 
    h^{2,0} (X)& =\delta(\Wedge^2mW_2\oplus mW_2)= m^2+m, \\    
    h^{3,0}(X)&  = \delta(\Wedge^3 mW_2\oplus \Wedge^2 mW_2)=m\tbinom{m}{2} + m^2=m\tbinom{m+1}{2},  \\
    h^{4,0}(X)&= \delta(\Wedge^4 mW_2\oplus \Wedge^3 mW_2)=\tbinom{m}{2}^2+m\tbinom{m}{2}=\tbinom{m}{2}\tbinom{m+1}{2},   \\
    h^{5,0}(X)&= \delta(\Wedge^5 mW_2\oplus \Wedge^4 mW_2)=\tbinom{m}{2}\tbinom{m}{3}+ \tbinom{m}{2}^2= \tbinom{m}{2}\tbinom{m+1}{3},\\
    h^{1,1}(X) & = \delta(mW_2\tensor ( mW_2\oplus W_1)) +\delta(mW_2\oplus W_1)=2m^2+2m+1,\\
    h^{2,1}(X) & =  \delta(mW_2\tensor ( \Wedge^2 mW_2\oplus m W_2)) +\delta(\Wedge^2 mW_2\oplus m W_2)\\
    & = \delta\left(m W_2 \tensor \left(\tbinom{m+1}{2}W_1\oplus \tbinom{m}{2} W_3\oplus m W_2\right)\right)+ m^2+m \\
    & = m\tbinom{m+1}{2}+2m\tbinom{m}{2}+2m^2+m^2+m\\
    & = (3m+2)\tbinom{m+1}{2}.
\end{align*}
The other Hodge numbers are too cumbersome to compute, so we stop here.
 
\end{exam}
\begin{exam}
 Assume now that the Jordan partition of $B$ is $n=m\cdot 2+1\cdot 1$ (that is, $q_2=m, q_1=1$) and $\epsilon=1$, and choose $j=2$ so that $\gothg$ is indeed $2$-step nilpotent. Therefore $\gotha^*\isom W_1\oplus n W_2$, just as in Example \ref{exam: 1}. Since the Betti numbers depend only on $\gotha^*$, we have that the Betti numbers $b_k(X)$ in this case are given by the same expressions as in Example \ref{exam: 1} (with $n$ odd).

For the computation of the Hodge numbers, we note that Lemma \ref{lem: module structures} gives isomorphisms $\einsnull{\lg^*}\cong (m+1)W_2$ and $\nulleins{\gothb^*}\cong W_1 \oplus mW_2$. It follows from Proposition \ref{prop: betti-hodge} that the first Hodge numbers are
\begin{align*}
    h^{1,0}(X)&=\delta(\einsnull{\lg^*})= m+1,  \\ 
    h^{0,1} (X)& =\delta(\nulleins{\gothb^*})+\delta (\IC)= m+2, \\    
    h^{2,0}(X)&  = \delta(\Wedge^2 \einsnull{\lg^*})=(m+1)^2,  \\
    h^{1,1}(X)&= \delta(\nulleins{\gothb^*}\tensor \einsnull{\lg^*})+\delta(\einsnull{\lg^*})=2(m+1)^2,   \\
    h^{0,2}(X)&= \delta(\Wedge^2\nulleins{\gothb^*})+\delta(\nulleins{\gothb^*})=(m+1)^2.
\end{align*}
\end{exam}

\bibliographystyle{alpha}
\bibliography{references}

\end{document}